\newcommand{\mini}{\mathop{\mbox{minimize}}}
\newcommand{\subj}{\mathop{\mbox{subject to}}}
\newcommand{\dist}{{\rm dist}}
\theoremstyle{definition}
\newtheorem{definition}{Definition}
\newtheorem{theorem}{Theorem}
\newtheorem{proposition}{Proposition}
\newtheorem{corollary}{Corollary}
\newtheorem{lemma}{Lemma}
\begin{document}

\title{Local convergence analysis of stabilized sequential quadratic programming methods for optimization problems in Banach spaces}

\author{Yuya Yamakawa\thanks{Graduate School of Management, Tokyo Metropolitan University, Tokyo 192-0397, Japan, E-mail:~yuya@tmu.ac.jp}}


\maketitle

\begin{abstract}
This paper presents a stabilized sequential quadratic programming (SQP) method for solving optimization problems in Banach spaces. The optimization problem considered in this study has a general form that enables us to represent various types of optimization problems and is particularly applicable to optimal control, obstacle, and shape optimization problems. Several SQP methods have been proposed for optimization problems in Banach spaces with specific structures; however, research on the local analysis of SQP-type methods for general problems, such as those considered in this study, is limited. We focus on the local behavior of the proposed stabilized SQP method and prove its local quadratic convergence under reasonable assumptions.
\end{abstract}
{\small
{\bf Keywords.}
Optimization problems in Banach spaces, \and stabilized sequential quadratic programming method, \and second-order sufficient condition, \and strict Robinson's constraint qualification, \and local convergence analysis, \and quadratic convergence
}

\section{Introduction} \label{sec:intro}
We consider the following optimization problems in Banach spaces:
\begin{align} \label{problem:main}
\begin{aligned}
& \displaystyle \mini_{x \in X} & & f(x)
\\
& \subj & & G(x) \in K,
\end{aligned}
\end{align}
where $X$ is a real Banach space, $Y$ is a real Hilbert space, $K$ is a nonempty closed convex set of $Y$, and $f \colon X \to \mathbb{R}$ and $G \colon X \to Y$ are twice continuously differentiable on $X$.
\par
Problem~\eqref{problem:main} provides a general framework that unifies the various optimization problems and plays a crucial role in describing optimization problems in Banach spaces. Such abstract optimization problems arise in several fields, including optimal control, obstacle, and shape optimization problems. In these fields, several researchers have studied various types of optimization problems and proposed methods for solving them~\cite{ArRaTr02,Be93,BeKu97,BlRu17,BoKaSt19,ClDeMePr20,FaVaMe23,GaHeHiKa15,HiHi06,HiKu06,HoNe21,HoDi13,ItKu96,KaVo12,KaSt18,KaStWa18,KaKuTa03,KlSa97,Ku96,PrTrWe08,St09,UlUl09,Vo00,Wa07,Ya23,ZiUl11}.  However, research on methods for optimization problems with a general form such as~\eqref{problem:main} is limited.
\par
Sequential quadratic programming (SQP) methods are widely known as one of the most powerful iterative methods for constrained optimization problems. These methods generate sequences by iteratively solving quadratic programming subproblems that approximate the objective function quadratically and the constraint functions linearly. In particular, many types of SQP methods, which are collectively known as SQP-type methods, have been developed for finite-dimensional optimization problems, including nonlinear programming~\cite{BoLa95,BoTo96,GiRo13,Ha99,IzSoUs15,So09,Sp98,Wr98}, nonlinear semidefinite programming~\cite{CoRa04,FaNoAp02,LiZh19,OkYaFu23,YaOk22,ZhZh16}, and other problems~\cite{HoDa00,KaFu07,LiPeSu06}. Their local fast convergence properties have been studied extensively. Meanwhile, although a significant number of SQP-type methods have been proposed for infinite-dimensional optimization problems~\cite{FaVaMe23,HoNe21,HoDi13,Ku96,Vo00,Wa07,Ya23}, they remain relatively fewer in number compared with those for finite-dimensional problems. Moreover, a limited number of studies have shown the local convergence of SQP-type methods for optimization problems with a general form such as~\eqref{problem:main}, and this limitation is not exclusive to SQP-type methods.
\par
Existing studies on optimization methods for problems in Banach spaces with a wide range of applications, such as~\eqref{problem:main}, can be found specifically in~\cite{BoKaSt19,KaSt18,Ya23}. Kanzow et al.~\cite{KaSt18} presented an augmented Lagrangian (AL) method for variational problems in Banach spaces. Variational problems encompass the first-order necessary conditions for~\eqref{problem:main}, which implies that the AL method could be applied to~\eqref{problem:main}. They established the local linear convergence of the AL method under the strict Robinson's constraint qualification (SRCQ), the second-order sufficient condition (SOSC), and an assumption that any generated sequence converges to some feasible point. B\"{o}rgens et al.~\cite{BoKaSt19} proposed another AL method for optimization problems in Banach spaces and provided its local convergence analysis. The problems considered in~\cite{BoKaSt19} are more general than~\eqref{problem:main} as the space $Y$ is a Banach space continuously embedded in some Hilbert space, and therefore the AL method has wider applications than that proposed in~\cite{KaSt18}. However, their local convergence analysis established only the strong convergence of the primal iterate, excluding the Lagrange multiplier, under a stronger SOSC than that of~\cite{KaSt18} and did not address the convergence rate. Yamakawa~\cite{Ya23} presented a stabilized SQP method for solving optimization problems in function spaces and proved its global convergence. The problems considered in~\cite{Ya23} are formulated in a broadly applicable form to describe optimization in function spaces, whereas they are included in~\eqref{problem:main} as a special case. Moreover, although the study analyzed the global convergence of the stabilized SQP method, it did not address its local and fast convergence.
\par
This study aims to develop a stabilized SQP method for solving~\eqref{problem:main} and prove its local quadratic convergence under reasonable assumptions. To the best of the author's knowledge, although Yamakawa~\cite{Ya23} proposed the globally convergent stabilized SQP method, no research has been conducted on the local fast convergence of stabilized SQP methods for problems with the same general form as~\eqref{problem:main}. This study is believed to be the first to prove the local quadratic convergence of stabilized SQP methods in this setting. The local convergence analysis of the proposed method assumes the SRCQ and the SOSC, and does not require any additional conditions, such as the convergence of a generated sequence to some feasible point, as assumed in~\cite{KaSt18}. Therefore, this study yields better results than those in~\cite{KaSt18}, as it establishes an optimization method with a higher convergence rate under reasonable assumptions. Moreover, in the field of finite-dimensional optimization, some fast convergent stabilized SQP methods have been established under milder assumptions, not including any constraint qualification (CQ). Their fast convergence properties are mainly derived from the fact that their subproblems can be solved exactly at all iteration points, and they satisfy the Lipschitz stability. Meanwhile, for infinite-dimensional optimization, even the existence of solutions to the subproblems is not guaranteed in the first place. Hence, it is difficult to directly extend the same technique as the existing study on stabilized SQP methods for finite-dimensional optimization to this study. However, under the SRCQ and the SOSC, we can show that the subproblems have approximate solutions that play a crucial role in the local convergence analysis. By utilizing them, the fast convergence of the proposed method can be established in a different manner from the existing study, and this is one remarkable point in the local convergence analysis.
\par
The remainder of the paper is organized as follows: Section~\ref{sec:pre} introduces mathematical notation and terminologies related to necessary and sufficient optimality conditions for problem~\eqref{problem:main}. In Section~\ref{sec:SSQP}, we review SQP-type methods and propose a stabilized SQP method for solving~\eqref{problem:main}. Finally, Section~\ref{sec:local_convergence} proves the local quadratic convergence property of the stabilized SQP method.

\section{Preliminaries} \label{sec:pre}
This section introduces mathematical notation and terminologies regarding optimality conditions for problem~\eqref{problem:main}. 

\subsection{Basic mathematical notation}
We denote the set of positive integers by $\mathbb{N}$. Let ${\cal X}$ and ${\cal Y}$ be real Banach spaces, and let ${\cal L}({\cal X},{\cal Y})$ be the normed space of all bounded linear operators from ${\cal X}$ to ${\cal Y}$. Moreover, we define ${\cal X}^{\ast} \coloneqq {\cal L}({\cal X}, \mathbb{R})$. The norms on ${\cal X}$ and ${\cal L}({\cal X}, {\cal Y})$ are defined by $\Vert \cdot \Vert_{{\cal X}}$ and $\Vert \cdot \Vert_{{\cal X} \to {\cal Y}}$, respectively. We denote the adjoint operator of $\varphi \in {\cal L}({\cal X}, {\cal Y})$ by $\varphi^{\ast} \in {\cal L}({\cal Y}^{\ast}, {\cal X}^{\ast})$. Let $\langle \cdot, \cdot \rangle_{{\cal X}^{\ast}, {\cal X}}$ be the associated dual pairing. If ${\cal X}$ is a Hilbert space, then its inner product is denoted by $(\cdot, \cdot)_{{\cal X}}$, its norm is defined by $\Vert \cdot \Vert_{{\cal X}} \coloneqq \sqrt{( \cdot, \cdot )_{{\cal X}}}$, and its dual ${\cal X}^{\ast}$ is identified with ${\cal X}$. Let ${\cal Z} \coloneqq {\cal X} \times {\cal Y}$ be the product space. The norm on ${\cal Z}$ is defined by $\Vert z \Vert_{{\cal Z}} \coloneqq \Vert x \Vert_{{\cal X}} + \Vert y \Vert_{\cal Y}$ for $z = (x, y) \in {\cal Z}$. Let ${\cal K}$ and ${\cal L}$ be sets of ${\cal X}$. We define ${\cal K} + {\cal L} \coloneqq \{ \alpha + \beta \in {\cal X}; \alpha \in {\cal K}, \beta \in {\cal L} \}$. For any number $\gamma \in \mathbb{R}$, we define $\gamma {\cal K} \coloneqq \{ \gamma \alpha; \alpha \in {\cal K} \}$. The polar cone of ${\cal K}$ is defined by ${\cal K}^{\circ} \coloneqq \{ \varphi \in {\cal X}^{\ast} ; \langle \varphi, \alpha \rangle_{{\cal X}^{\ast}, {\cal X}} \leq 0 ~ \forall \alpha \in {\cal K} \}$. For any point $x \in {\cal X}$, we define $\dist(x, {\cal K}) \coloneqq \inf \{ \Vert x - \alpha \Vert_{X}; \alpha \in {\cal K} \}$. The closed ball with center $c \in {\cal X}$ and radius $r > 0$ is denoted by $B_{{\cal X}}(c,r) \coloneqq \{ x \in {\cal X}; \, \Vert x - c \Vert_{{\cal X}} \leq r \}$. The closed ball with center at the origin is denoted by $B_{{\cal X}}$ for simplicity. For any nonempty convex set $C \subset {\cal X}$ and point $x \in C$, the tangent and normal cones of $C$ at $x$ is defined by ${\cal T}_{C}(x) \coloneqq \{ p \in {\cal X}; \, \exists \{ x_{k} \} \subset C, \, \exists \{ t_{k} \} \subset \mathbb{R} ~ \mbox{s.t.} ~ t_{k} > 0 ~ \forall k \in \mathbb{N}, \, \lim_{k \to \infty} t_{k} = 0, \, \lim_{k \to \infty} t_{k}^{-1}(x_{k} - x) = p \}$ and ${\cal N}_{C}(x) \coloneqq \{ \varphi \in {\cal X}^{\ast}; \langle \varphi, p - x \rangle_{{\cal X}^{\ast}, {\cal X}} \leq 0 ~ \forall p \in C \}$, respectively. Let ${\cal F} \colon {\cal X} \to {\cal Y}$ be a Fr\'echet differentiable functional at $x \in {\cal X}$. We denote the Fr\'echet derivative of ${\cal F}$ by ${\cal F}^{\prime}$. If ${\cal X}$ is a product space such that ${\cal X} = {\cal X}_{1} \times \cdots \times {\cal X}_{n}$ with $n \geq 2$, then $x \in X$ is represented as $x = (x_{1},\ldots,x_{n}) \in {\cal X}_{1} \times \cdots \times {\cal X}_{n}$, the partial Fr\'echet derivative of ${\cal F}$ with respect to $x_{i} \in {\cal X}_{i}$ is denoted by ${\cal F}_{x_{i}}$, and the partial Fr\'echet derivative of ${\cal F}_{x_{i}}$ with respect to $x_{j} \in {\cal X}_{j}$ is denoted by ${\cal F}_{x_{i}x_{j}}$. For a multifunction $\Psi \colon {\cal X} \to 2^{{\cal Y}}$ from X to the set $2^{{\cal Y}}$ of all subsets of $Y$, its graph inverse $\Psi^{-1} \colon {\cal Y} \to 2^{{\cal X}}$ is defined as $\Psi^{-1}(y) \coloneqq \{ x \in {\cal X}; y \in \Psi(x) \}$. For any number $\varepsilon \geq 0$, set ${\cal S} \subset {\cal X}$, and function $\psi \colon {\cal S} \to \mathbb{R}$, we say that $\bar{x}$ is an $\varepsilon$-optimal solution of an optimization problem that minimizes $\psi$ on ${\cal S}$ if $x \in {\cal S}$ and $\psi(\bar{x}) \leq \inf \{ \psi(x); x \in {\cal S} \} + \varepsilon$. If an infinite sequence $\{ x_{k} \} \subset {\cal X}$ strongly converges to some $x \in {\cal X}$, namely, $\Vert x_{k} - x \Vert_{{\cal X}} \to 0~(k \to \infty)$, then we write $x_{k} \to x~(k \to \infty)$. For a closed convex set $C$ in a Hilbert space, the metric projector over $C$ is denoted by $P_{C}$. For a set $T$ in a topological space, its interior and cardinality are denoted by ${\rm int}(T)$ and ${\rm card}(T)$, respectively.

\subsection{Optimality conditions}
To begin with, we define $\Phi \coloneqq \{ x \in X; G(x) \in K \}$ and $V \coloneqq X \times Y$. Moreover, the Lagrange function $L \colon V \to \mathbb{R}$ is defined as follows:
\begin{align*}
L(v) \coloneqq f(x) + ( \lambda, G(x) )_{Y}, \quad v = (x, \lambda).
\end{align*}
The Karush-Kuhn-Tucker (KKT) conditions for problem~\eqref{problem:main} are defined as follows:
\begin{definition}
We say that $v = (x, \lambda) \in V$ satisfies the KKT conditions of problem~\eqref{problem:main} if
\begin{align*}
L_{x}(v) = 0, \quad \lambda \in {\cal N}_{K}(G(x)).
\end{align*}
Moreover, we say $v$ the KKT point of problem~\eqref{problem:main}.
\end{definition}
\noindent
The KKT conditions provide the first-order necessary optimality for~\eqref{problem:main} under regularity conditions, referred to as constraint qualifications (CQs). Several types of CQs have been proposed so far. This study presents two types of CQs given below.
\begin{definition}
We say that the Robinson constraint qualification (RCQ) holds at $x \in \Phi$ if
\begin{align*}
0 \in {\rm int}(G(x) + G^{\prime}(x) X - K).
\end{align*}
\end{definition}

\begin{definition}
Let $v = (x, \lambda) \in X \times Y$ be a KKT point of problem~\eqref{problem:main}. We say that the strict RCQ (SRCQ) holds at $v$ if
\begin{align*}
0 \in {\rm int}(G(x) + G^{\prime}(x) X - K_{0}),
\end{align*}
where $K_{0} \coloneqq \{ y \in K; (\lambda, y - G(x))_{Y} = 0 \}$.
\end{definition}
\noindent
In the subsequent argument, let $\bar{x} \in X$ be a local optimum of~\eqref{problem:main}. Moreover, the set $\Lambda(\bar{x})$ is defined as
\begin{align*}
\Lambda(\bar{x}) \coloneqq \{ \lambda \in Y; f^{\prime}(\bar{x}) + G^{\prime}(\bar{x})^{\ast} \lambda = 0, \lambda \in {\cal N}(G(\bar{x})) \}.
\end{align*}
Note that $\Lambda(\bar{x})$ is the set of Lagrange multipliers $\lambda \in Y$ such that $(\bar{x}, \lambda)$ satisfies the KKT conditions. If the RCQ holds at $\bar{x}$, then $\Lambda(\bar{x})$ is a nonempty, convex, bounded, and weakly compact subset of $Y$. Meanwhile, the SRCQ clearly implies the RCQ and further ensures that $\Lambda(\bar{x}) = \{ \bar{\lambda} \}$. Further details on these properties are provided in~\cite[Theorems~3.9 and 4.47]{BoSh00}. 
\par
Next, we introduce the second-order optimality. The local analysis discussed in Section~\ref{sec:local_convergence} utilizes the following SOSC.
\begin{definition} \label{def:SOSC}
Let $v = (x, \lambda) \in X \times Y$ satisfy the KKT conditions of~\eqref{problem:main}. We say that the SOSC holds at $v$ if there exist $c > 0$ and $\eta > 0$ such that 
\begin{align*}
\langle L_{xx}(v) d, d \rangle_{X^{\ast}, X} \geq c \Vert d \Vert_{X}^{2}
\end{align*}
for all $d \in C_{\eta}(x)$, where
\begin{align*}
C_{\eta}(x) \coloneqq \{ d \in X; \langle f^{\prime}(x), d \rangle_{X^{\ast}, X} \leq \eta \Vert d \Vert_{X}, G^{\prime}(x) d \in {\cal T}_{K}(G(x)) \}.
\end{align*}
\end{definition}
\noindent
The SOSC is often assumed to analyze the local behavior of optimization methods and ensures the following second-order growth condition at $\bar{x}$: There exist $m > 0$ and $r > 0$ such that
\begin{align*}
f(x) \geq f(\bar{x}) + m \Vert x - \bar{x} \Vert_{X}^{2}
\end{align*}
for any $x \in \Phi \cap B_{X}(\bar{x},r)$. This is one of the important properties in local analysis. Kanzow et al.~\cite{KaSt18} and B\"{o}rgens et al.~\cite{BoKaSt19} discussed the local convergence of the proposed AL methods as stated in Section~\ref{sec:intro}, and they also used the SOSC in their local analyses. Note that the following another SOSC was assumed in Kanzow et al.~\cite{KaSt18} whereas the same one as Definition~\ref{def:SOSC} was used in B\"{o}rgens et al.~\cite{BoKaSt19}: 
\begin{align}
\langle L_{xx}(\bar{v}) d, d \rangle_{X^{\ast}, X} \geq c \Vert d \Vert_{X}^{2} \label{anotherSOSC}
\end{align}
for all $d \in \widetilde{C}_{\eta}(\bar{x})$, where 
\begin{align*}
\widetilde{C}_{\eta}(\bar{x}) \coloneqq \{ d \in X; \langle f^{\prime}(\bar{x}), d \rangle_{X^{\ast}, X} \leq \eta \Vert d \Vert_{X}, \dist (G^{\prime}(\bar{x}) d, {\cal T}_{K}(G(\bar{x})) ) \leq \eta \Vert d \Vert_{X} \}.
\end{align*}
The SOSC given by~\eqref{anotherSOSC} is stronger than that of Definition~\ref{def:SOSC} since $C_{\eta}(\bar{x}) \subset \widetilde{C}_{\eta}(\bar{x})$.


\section{Stabilized SQP method} \label{sec:SSQP}
The former part of this section provides an overview of two types of SQP methods, including ordinary and stabilized versions, and the latter part presents a stabilized SQP method for problem~\eqref{problem:main}. To this end, we consider the following generalized equation~(GE) of the KKT conditions for~\eqref{problem:main}:
\begin{align} \label{GE:KKT}
0 \in
\begin{bmatrix}
L_{x}(v)
\\
-G(x)
\end{bmatrix}
+
\begin{bmatrix}
\{ 0 \}
\\
{\cal N}_{K}^{-1}(\lambda)
\end{bmatrix}
.
\end{align}
Now, we apply the Josephy-Newton method to GE~\eqref{GE:KKT}. Let $v = (x,\lambda) \in V$ be given. The Josephy-Newton method iteratively solves the subproblem below:
\begin{align} \label{GE:subproblem}
0 \in 
\begin{bmatrix}
L_{x}(v)
\\
-G(x)
\end{bmatrix}
+
\begin{bmatrix}
L_{xx}(v) & G^{\prime}(x)^{\ast}
\\
-G^{\prime}(x) & 0
\end{bmatrix}
\begin{bmatrix}
\xi - x
\\
\mu - \lambda
\end{bmatrix}
+
\begin{bmatrix}
\{ 0 \}
\\
{\cal N}_{K}^{-1}(\mu)
\end{bmatrix}
,
\end{align}
which is equivalent to finding a solution $(d, \mu) \in V$ satisfying $f^{\prime}(x) + L_{xx}(v) d + G^{\prime}(x)^{\ast} \mu = 0$ and $\mu \in {\cal N}_{K}( G(x) + G^{\prime}(x) d )$.
Furthermore, they coincide with the KKT conditions of the following problem:
\begin{align*}
\mbox{P$_{0}(v)$}\qquad
\begin{aligned}
& \displaystyle \mini_{d \in X} & & \langle f^{\prime}(x), d \rangle_{X^{\ast}, X} + \frac{1}{2} \langle L_{xx}(v) d, d \rangle_{X^{\ast}, X}
\\
& \subj & & G(x) + G^{\prime}(x) d \in K.
\end{aligned}
\end{align*}
We recall that P$_{0}(v)$ corresponds to the subproblem of the standard SQP methods. Accordingly, the SQP methods are interpreted as the Josephy-Newton methods for GE~\eqref{GE:KKT}. Hence, they obtain a solution $(d, \mu)$ of P$_{0}(v)$ at each iteration and update $x$ and $\lambda$ as $x \leftarrow x + d$ and $\lambda \leftarrow \mu$, respectively.
\par
Meanwhile, the stabilized version of the SQP methods~\cite{FeSo10,GiRo13,Ha99,IzSo12,IzSoUs15,Wr98} has been proposed as a suitable method for degenerate optimization problems satisfying no CQs. In general, the boundedness of the Lagrange multipliers satisfying the KKT conditions is not ensured for degenerate problems, and thus sequences of the Lagrange multipliers generated by iterative methods may diverge. To avoid this phenomenon in the Josephy-Newton method, we consider adding a regularization term regarding the Lagrange multipliers to subproblem~\eqref{GE:subproblem}. Then, one of the methods can be considered to adopt the following subproblem instead of~\eqref{GE:subproblem}:
\begin{align} \label{GE:stabilized_subproblem}
0 \in 
\begin{bmatrix}
L_{x}(v)
\\
-G(x)
\end{bmatrix}
+
\begin{bmatrix}
L_{xx}(v) & G^{\prime}(x)^{\ast}
\\
-G^{\prime}(x) & \sigma I
\end{bmatrix}
\begin{bmatrix}
\xi - x
\\
\mu - \lambda
\end{bmatrix}
+
\begin{bmatrix}
\{ 0 \}
\\
{\cal N}_{K}^{-1}(\mu)
\end{bmatrix}
,
\end{align}
where $\sigma$ is a positive parameter and $I$ denotes the identity mapping on $Y$. Based on the same aforementioned argument, solving~\eqref{GE:stabilized_subproblem} is reduced to finding a KKT point of the following problem:
\begin{align*}
\begin{aligned}
& \displaystyle \mini_{(d, \mu) \in V} & & \langle f^{\prime}(x), d \rangle_{X^{\ast}, X} + \frac{1}{2} \langle L_{xx}(v) d, d \rangle_{X^{\ast}, X} + \frac{\sigma}{2} \Vert \mu \Vert_{Y}^{2}
\\
& \subj & & G(x) + G^{\prime}(x) d - \sigma (\mu - \lambda) \in K.
\end{aligned}
\end{align*}
This is referred to as the stabilized subproblem and always has a feasible solution, such as $(0, \frac{1}{\sigma} G(x) + \lambda - \frac{1}{\sigma} P_{K}(G(x)))$, whereas such feasibility is not ensured for subproblem P$_{0}(v)$ of the ordinary SQP methods. The stabilized SQP methods iteratively solve the stabilized subproblem to obtain its solution $(d, \mu)$ and update the current point $(x, \lambda)$ by the same rule as the ordinary SQP methods. In this study, the positive parameter $\sigma$ is adopted as
\begin{align}
\sigma(v) \coloneqq \Vert f^{\prime}(x) + G^{\prime}(x)^{\ast} \lambda \Vert_{X^{\ast}} + \Vert G(x) - P_{K}(G(x) + \lambda) \Vert_{Y}, \label{def:sigma}
\end{align}
and the proposed stabilized SQP method iteratively solves
\begin{align*}
\mbox{P$(v)$}\qquad
\begin{aligned}
& \displaystyle \mini_{(d, \mu) \in V} & & \langle f^{\prime}(x), d \rangle_{X^{\ast}, X} + \frac{1}{2} \langle L_{xx}(v) d, d \rangle_{X^{\ast}, X} + \frac{\sigma(v)}{2} \Vert \mu \Vert_{Y}^{2}
\\
& \subj & & G(x) + G^{\prime}(x) d - \sigma(v) (\mu - \lambda) \in K.
\end{aligned}
\end{align*}
The formal statement of the proposed method is as follows:
\begin{algorithm}[tbh]
\caption{(The proposed stabilized SQP method)} \label{Local_SSQP}
\begin{algorithmic}[1]
\State{Choose $v_{0} \coloneqq (x_{0}, \lambda_{0}) \in V$ and set $k \coloneqq 0$.} \Comment{Step~0}

\State{Stop if $v_{k}$ satisfies the termination criterion.} \Comment{Step~1}

\State{Solve~P$(v_{k})$ and find its approximate local optimum $(d_{k}, \mu_{k}) \in V$.} \Comment{Step~2}

\State{Calculate $x_{k+1} \coloneqq x_{k} + d_{k}$, $\lambda_{k+1} \coloneqq \mu_{k}$, and $v_{k+1} \coloneqq (x_{k+1}, \lambda_{k+1})$.} \Comment{Step~3}

\State{Update $k \leftarrow k+1$ and return to Step~1.} \Comment{Step~4}
\end{algorithmic}
\end{algorithm}

We recall that Algorithm~\ref{Local_SSQP} is based on the Newton-type method and is designed to achieve the local fast convergence to the KKT point $v$. Consequently, the unit step size is adopted.
\par
In the remainder of this section, we briefly discuss the well-definedness of Algorithm~\ref{Local_SSQP}. Note that this depends on whether problem P$(v)$ can be solved at each iteration point $v$. In general, the solvability of the subproblem is not ensured because there is no guarantee regarding the reflexivity of $X$ and the coerciveness of the mapping $d \mapsto \langle L_{xx}(v) d, d \rangle_{X^{\ast}, X}$. However, if $v$ is sufficiently close to the KKT point $\bar{v}$, then P$(v)$ can be approximately solved under suitable assumptions. This fact is shown in the next section.

\section{Local convergence analysis} \label{sec:local_convergence}
This section provides local convergence analysis of Algorithm~\ref{Local_SSQP}. Throughout the section, we denote by $\Gamma$ the set of KKT points for problem~\eqref{problem:main}, and assume that there exists at least one KKT point $\bar{v} \coloneqq (\bar{x}, \bar{\lambda}) \in \Gamma$. Moreover, we make the following assumptions:
\begin{description}
\item[{\rm (A1)}] The SRCQ holds at $\bar{v}$;
\item[{\rm (A2)}] the SOSC holds at $\bar{v}$;
\item[{\rm (A3)}] the second derivatives of $f$ and $G$ are locally Lipschitz continuous at $\bar{x}$.
\end{description}
We also suppose that Algorithm~\ref{Local_SSQP} generates an infinite sequence $\{ v_{k} \} \subset V$ such that $v_{k} \not \in \Gamma$ for each $k \in \mathbb{N} \cup \{ 0 \}$.
\par
From now on, we prepare some inequalities for the subsequent local analysis. Let us define $\Omega \coloneqq \{ d \in X; G^{\prime}(\bar{x}) d \in {\cal T}_{K}(G(\bar{x})) \}$. Thanks to (A1), the RCQ is valid, namely, $0 \in {\rm int}(G(\bar{x}) + G^{\prime}(\bar{x}) X - K)$. Meanwhile, we readily have ${\rm int}(G(\bar{x}) + G^{\prime}(\bar{x})X - K) \subset {\rm int}(G^{\prime}(\bar{x})X - {\cal T}_{K}(G(\bar{x})))$ because $K - G(\bar{x}) \subset {\cal T}_{K}(G(\bar{x}))$ is satisfied. These facts lead to $0 \in {\rm int}(G^{\prime}(\bar{x}) X - {\cal T}_{K}(G(\bar{x})))$. Hence, by applying~\cite[Theorem~2.87]{BoSh00}, there exists $\kappa > 0$ and $\nu_{1} > 0$ such that $\dist(d, \Omega) \leq \kappa \dist(G^{\prime}(\bar{x}) d, {\cal T}_{K}(G(\bar{x})))$ for any $d \in \nu_{1} B_{X}$. Noting $K - G(\bar{x}) \subset {\cal T}_{K}(G(\bar{x}))$ implies $\dist(G^{\prime}(\bar{x}) d, {\cal T}_{K}(G(\bar{x})) ) \leq \dist(G(\bar{x}) + G^{\prime}(\bar{x}) d, K)$, and hence for each $d \in \nu_{1} B_{X}$,
\begin{align}
\dist(d, \Omega) \leq \kappa \dist(G(\bar{x}) + G^{\prime}(\bar{x}) d, K). \label{ineq:dist_dPhi}
\end{align}

\noindent
Let ${\cal F}$ be define as the objective function of P$_{0}(\bar{v})$, namely, ${\cal F}(d) \coloneqq \langle f^{\prime}(\bar{x}), d \rangle_{X^{\ast}, X} + \frac{1}{2} \langle L_{xx}(\bar{v}) d, d \rangle_{X^{\ast}, X}$. From the differentiability of ${\cal F}$ at $d = 0$, there exists $\nu_{2} > 0$ such that $| {\cal F}(d) -{\cal F}(0) - \langle {\cal F}^{\prime}(0), d \rangle_{X^{\ast}, X} | \leq \frac{\eta}{8} \Vert d \Vert_{X}$ for all $d \in \nu_{2} B_{X}$. Then, utilizing ${\cal F}(d) -{\cal F}(0) - \langle {\cal F}^{\prime}(0), d \rangle_{X^{\ast}, X} = \frac{1}{2} \langle L_{xx}(\bar{v}) d, d \rangle_{X^{\ast}, X}$ yields that for each $d \in \nu_{2} B_{X}$,
\begin{align}
\langle L_{xx}(\bar{v}) d, d \rangle_{X^{\ast}, X} \geq - \frac{\eta}{4} \Vert d \Vert_{X}.  \label{ineq:calF_Lxx}
\end{align}

\par
Let $v = (x, \lambda) \in V \backslash \Gamma$ be given. In what follows, we discuss the solvability of problem~P$(v)$. Since there is no guarantee that the subproblem can be solved as stated in the previous section, let us consider the following problem instead of~P$(v)$:
\begin{align*}
\mbox{Q$(v)$}\qquad
\begin{aligned}
& \displaystyle \mini_{(d, \mu) \in V} & & \langle f^{\prime}(x), d \rangle_{X^{\ast}, X} + \frac{1}{2} \langle L_{xx}(v) d, d \rangle_{X^{\ast}, X} + \frac{\sigma(v)}{2} \Vert \mu \Vert_{Y}^{2}
\\
& \subj & & G(x) + G^{\prime}(x) d - \sigma(v) (\mu - \lambda) \in K, \quad \Vert d \Vert_{X} \leq \nu,
\end{aligned}
\end{align*}
where $\nu \coloneqq \min \{ \nu_{1}, \nu_{2} \}$. Notice that Q$(v)$ is an optimization problem obtained by adding the constraint $\Vert d \Vert_{X} \leq \nu$ to P$(v)$. As shown in Theorem~\ref{theorem:omega_sigma}, approximate solutions of Q$(v)$ are also those of P$(v)$ under (A1) and (A2) if $v$ is close to $\bar{v}$. Hence, using problem~Q$(v)$ is reasonable when $x$ is close to $\bar{x}$. Such a {\it localized technique} is often used in the local convergence analysis. For example, see~\cite{BoKaSt19,FeSo12,Ro82,SuSuZh08}. The subsequent local analysis also uses the localized technique. To begin with, we show that problem~Q$(v)$ can be approximately solved thanks to the spherical constraint.
\begin{theorem} \label{theorem:eps_optimal}
Let $v \in V \backslash \Gamma$ and $\varepsilon > 0$. Then, problem ${\rm Q}(v)$ has an $\varepsilon$-optimal solution $w(v) = (d(v), \mu(v)) \in V$. Moreover, there exist another $\varepsilon$-optimal solution $\widehat{w}(v) = (\widehat{d}(v), \widehat{\mu}(v)) \in V$ and $\rho(v) \in Y$ such that
\begin{align*}
\begin{gathered}
\Vert w(v) - \widehat{w}(v) \Vert_{V} \leq \sqrt{\varepsilon},
~ \dist(- f^{\prime}(x) - L_{xx}(v) \widehat{d}(v) - G^{\prime}(x)^{\ast} \rho(v), {\cal N}_{\nu B_{X}}(\widehat{d}(v))) \leq \sqrt{\varepsilon},
\\
\sigma(v) \Vert \rho(v) - \widehat{\mu}(v) \Vert_{Y} \leq \sqrt{\varepsilon},
~ \rho(v) \in {\cal N}_{K}( G(x) + G^{\prime}(x) \widehat{d}(v) - \sigma(v)(\widehat{\mu}(v) - \lambda) ).
\end{gathered}
\end{align*}
\end{theorem}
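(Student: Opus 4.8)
The plan is to combine Ekeland's variational principle with the fact that the stabilization variable $\mu$ automatically qualifies the conic constraint, so that an exact multiplier can be extracted even though ${\rm Q}(v)$ itself need not possess a minimizer. Write $v = (x,\lambda)$, let $J_{v}$ and $S_{v}$ denote the objective and feasible set of ${\rm Q}(v)$, and set $\Theta(d,\mu) \coloneqq G(x) + G^{\prime}(x) d - \sigma(v)(\mu - \lambda)$, so that $S_{v} = \{ (d,\mu) \in V ; \Theta(d,\mu) \in K, \Vert d \Vert_{X} \leq \nu \}$. First I would settle existence of $w(v)$: the set $S_{v}$ is nonempty, as $(0, \lambda + \sigma(v)^{-1}(G(x) - P_{K}(G(x))))$ is feasible and $d = 0$ obeys $\Vert d \Vert_{X} \leq \nu$, and it is closed because $\Theta$ is continuous, $K$ is closed, and $\nu B_{X}$ is closed. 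Since $v \notin \Gamma$ forces $\sigma(v) > 0$, and since $\Vert d \Vert_{X} \leq \nu$ bounds $\langle f^{\prime}(x), d \rangle_{X^{\ast},X}$ and $\frac{1}{2}\langle L_{xx}(v) d, d \rangle_{X^{\ast},X}$ while $\frac{\sigma(v)}{2}\Vert \mu \Vert_{Y}^{2} \geq 0$, the objective $J_{v}$ is bounded below on $S_{v}$; hence its infimum is finite and an $\varepsilon$-optimal solution $w(v) = (d(v),\mu(v)) \in S_{v}$ exists by definition of the infimum. This is the step that genuinely uses the spherical constraint $\Vert d \Vert_{X} \le \nu$.

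Next I would apply Ekeland's variational principle to the lower semicontinuous, bounded-below functional $J_{v}$ on the complete metric space $(S_{v}, \Vert \cdot \Vert_{V})$, starting from $w(v)$ and with parameter $\sqrt{\varepsilon}$. This produces $\widehat{w}(v) = (\widehat{d}(v), \widehat{\mu}(v)) \in S_{v}$ with $J_{v}(\widehat{w}(v)) \leq J_{v}(w(v))$ (so $\widehat{w}(v)$ is again $\varepsilon$-optimal), with $\Vert w(v) - \widehat{w}(v) \Vert_{V} \leq \sqrt{\varepsilon}$ (the first estimate), and such that $\widehat{w}(v)$ is an \emph{exact} minimizer over $S_{v}$ of the perturbed functional $(d,\mu) \mapsto J_{v}(d,\mu) + \sqrt{\varepsilon}\, \Vert (d,\mu) - \widehat{w}(v) \Vert_{V}$. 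The single choice of parameter $\sqrt{\varepsilon}$ simultaneously produces the first estimate and the $\sqrt{\varepsilon}$ slack that will appear in the remaining two.

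I would then write first-order optimality conditions for this exact, perturbed minimization. The decisive observation is that the partial derivative of $\Theta$ with respect to $\mu$ equals $-\sigma(v) I$, a linear homeomorphism of $Y$ since $\sigma(v) > 0$; hence $\Theta^{\prime}(\widehat{w}(v))$ is already surjective onto $Y$ when restricted to directions $(0, h_{\mu})$, which lie in the tangent cone of $\nu B_{X} \times Y$. Consequently the (generalized) Robinson constraint qualification for the conic constraint $\Theta(d,\mu) \in K$ holds at $\widehat{w}(v)$ irrespective of the geometry of $K$ and $G$ at $v$, and standard first-order optimality conditions (see~\cite{BoSh00}) furnish a multiplier $\rho(v) \in {\cal N}_{K}(\Theta(\widehat{w}(v)))$ --- the last displayed inclusion --- together with a subgradient $(\zeta_{d}, \zeta_{\mu})$ of the Ekeland penalty at $\widehat{w}(v)$ with $\Vert \zeta_{d} \Vert_{X^{\ast}} \leq \sqrt{\varepsilon}$ and $\Vert \zeta_{\mu} \Vert_{Y} \leq \sqrt{\varepsilon}$ (the norm dual to $\Vert d \Vert_{X} + \Vert \mu \Vert_{Y}$ being the maximum of the component dual norms) such that
\begin{align*}
-f^{\prime}(x) - L_{xx}(v) \widehat{d}(v) - G^{\prime}(x)^{\ast} \rho(v) - \zeta_{d} &\in {\cal N}_{\nu B_{X}}(\widehat{d}(v)), \\
\sigma(v)(\widehat{\mu}(v) - \rho(v)) + \zeta_{\mu} &= 0.
\end{align*}
The first relation gives $\dist(-f^{\prime}(x) - L_{xx}(v)\widehat{d}(v) - G^{\prime}(x)^{\ast}\rho(v), {\cal N}_{\nu B_{X}}(\widehat{d}(v))) \leq \Vert \zeta_{d} \Vert_{X^{\ast}} \leq \sqrt{\varepsilon}$, and the second gives $\sigma(v) \Vert \rho(v) - \widehat{\mu}(v) \Vert_{Y} = \Vert \zeta_{\mu} \Vert_{Y} \leq \sqrt{\varepsilon}$, which are the remaining two estimates.

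I expect the main obstacle to be precisely the multiplier existence: in this nonreflexive, non-coercive Banach-space setting ${\rm Q}(v)$ may admit no minimizer, and no constraint qualification for the original problem is available at the iterate $v$. Ekeland's principle circumvents the first difficulty by manufacturing an exact minimizer of a nearby problem, while the surjectivity of $-\sigma(v) I$ supplies a constraint qualification for that problem for free; the only additional care needed is to track that the nonsmooth Ekeland penalty contributes subgradient terms of norm at most $\sqrt{\varepsilon}$, which is exactly what converts the exact stationarity into the two approximate estimates.
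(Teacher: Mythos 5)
Your proposal is correct and follows essentially the same route as the paper: the same explicit feasible point, the same lower bound on the objective via the spherical constraint, and the same observation that the $-\sigma(v)I$ block makes the constraint derivative surjective, hence Robinson's CQ holds at every feasible point of ${\rm Q}(v)$. The only difference is that the paper then invokes \cite[Theorem~3.23]{BoSh00} as a black box for the second part, whereas you unfold that theorem's proof (Ekeland's variational principle with parameter $\sqrt{\varepsilon}$ plus first-order conditions for the perturbed problem) by hand.
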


\begin{proof}
Let $v = (x, \lambda) \in V \backslash \Gamma$ be a fixed point, and let $\varphi \colon V \to \mathbb{R}$, $\Psi \colon V \to Y$, and ${\cal S} \subset V$ be defined by $\varphi(d, \mu) \coloneqq \langle f^{\prime}(x), d \rangle_{X^{\ast}, X} + \frac{1}{2} \langle L_{xx}(v) d, d \rangle_{X^{\ast}, X} + \frac{\sigma(v)}{2} \Vert \mu \Vert_{Y}^{2}$, $\Psi(d,\mu) \coloneqq G(x) + G^{\prime}(x) d - \sigma(v) (\mu - \lambda)$, and ${\cal S} \coloneqq \{ (d, \mu) \in V; \Psi(d, \mu) \in K, \, \Vert d \Vert_{X} \leq \nu \}$, respectively. Note that $\sigma(v) > 0$ holds from $v \not \in \Gamma$ and that ${\cal S}$ is nonempty because $(0, \frac{1}{\sigma(v)}G(x) + \lambda - \frac{1}{\sigma(v)} P_{K}(G(x) + \sigma(v) \lambda) )$ is a feasible solution. 
\par
Now, we show that Q$(v)$ has an $\varepsilon$-optimal solution. For each $(d, \mu) \in {\cal S}$, we have $\varphi(d, \mu) \geq - \nu \Vert f^{\prime}(x) \Vert_{X^{\ast}} - \frac{\nu^{2}}{2} \Vert L_{xx}(v) \Vert_{X \to X^{\ast}}$, that is, $\inf \{ \varphi(d,\mu); (d,\mu) \in {\cal S} \} > -\infty$. Thus, there exists an $\varepsilon$-optimal solution $(d(v), \mu(v)) \in {\cal S}$ of Q$(v)$.
\par
Next, we show the latter part of the assertion. Let $(d, \mu) \in {\cal S}$ be arbitrary. Note that $\Psi^{\prime}(d, \mu)[\, \cdot \, , \, \cdot \, ] = G^{\prime}(x)[\, \cdot \,] - \sigma(v)[\, \cdot \,]$ is a surjective mapping from $V$ to $Y$ because $\sigma(v) > 0$ holds. Indeed, for any $y \in Y$, there exists $(0, - \frac{1}{\sigma(v)} y) \in V$ such that $\Psi^{\prime}(d, \mu)[0, - \frac{1}{\sigma(v)} y] = G^{\prime}(x) [0] - \sigma(v) [- \frac{1}{\sigma(v)} y] = y$. Hence, we obtain $\Psi(d, \mu) + \Psi^{\prime}(d, \mu) [ \nu B_{X} - d, V - \mu ] - K = Y$, i.e.,
\begin{align*}
0 \in Y = {\rm int} (Y) = {\rm int}( \Psi(d, \mu) + \Psi^{\prime}(d, \mu) [ \nu B_{X} - d, V - \mu ] - K ).
\end{align*}
This result suggests that problem~Q$(v)$ satisfies the RCQ at any $(d, \mu) \in {\cal S}$. Then, the assertion of this theorem follows from~\cite[Theorem~3.23]{BoSh00}.
\end{proof}

Theorem~\ref{theorem:eps_optimal} states that problem~Q$(v)$ has an $\varepsilon$-optimal solution whereas the existence of exact optimal solutions is not ensured. 
\par
The goal of the former part of Section~\ref{sec:local_convergence} is to show that problem~P$(v)$ has an approximate solution. Accordingly, we prepare some properties regarding the function~$\sigma$ defined by~\eqref{def:sigma}, such as the error bound property and the Lipschitz continuity.
\begin{proposition} \label{prop:error_bound}
If {\rm (A1)} and {\rm (A2)} hold, then there exist $m > 0$ and $r > 0$ such that $\Vert v - \bar{v} \Vert_{V} \leq m \sigma(v)$ for all $v \in B_{V}(\bar{v}, r)$.
\end{proposition}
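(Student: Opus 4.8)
The plan is to read the statement as an upper-Lipschitz (error-bound) stability result for the KKT system, observing that $\sigma$ is, to first order, the natural residual of the generalized equation \eqref{GE:KKT}, and to argue by contradiction. Suppose the claim fails. Then there is a sequence $v_{k}=(x_{k},\lambda_{k})\to\bar{v}$ with $v_{k}\neq\bar{v}$ and $\sigma(v_{k})/t_{k}\to 0$, where $t_{k}\coloneqq\Vert v_{k}-\bar{v}\Vert_{V}\to 0$. Setting $\delta_{k}\coloneqq(x_{k}-\bar{x})/t_{k}$ and $\gamma_{k}\coloneqq(\lambda_{k}-\bar{\lambda})/t_{k}$, the definition of $\Vert\cdot\Vert_{V}$ gives $\Vert\delta_{k}\Vert_{X}+\Vert\gamma_{k}\Vert_{Y}=1$, so it suffices to prove $\Vert\delta_{k}\Vert_{X}\to 0$ and $\Vert\gamma_{k}\Vert_{Y}\to 0$, which contradicts this normalization. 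Since $\sigma(v_{k})=\Vert f^{\prime}(x_{k})+G^{\prime}(x_{k})^{\ast}\lambda_{k}\Vert_{X^{\ast}}+\Vert G(x_{k})-P_{K}(G(x_{k})+\lambda_{k})\Vert_{Y}$, both summands divided by $t_{k}$ tend to $0$.

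First I would linearize the stationarity residual: using (A3) to Taylor-expand $f^{\prime}$ and $G^{\prime}(\cdot)^{\ast}\lambda_{k}$ at $\bar{x}$ and cancelling $f^{\prime}(\bar{x})+G^{\prime}(\bar{x})^{\ast}\bar{\lambda}=0$ yields $L_{xx}(\bar{v})\delta_{k}+G^{\prime}(\bar{x})^{\ast}\gamma_{k}\to 0$ in $X^{\ast}$. Next I would extract information from the projection residual. Writing $z_{k}\coloneqq P_{K}(G(x_{k})+\lambda_{k})\in K$ and $r_{k}\coloneqq G(x_{k})-z_{k}$ with $\Vert r_{k}\Vert_{Y}=o(t_{k})$, the projection property gives $\lambda_{k}+r_{k}\in{\cal N}_{K}(z_{k})$, while $\bar{\lambda}\in{\cal N}_{K}(G(\bar{x}))$. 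Monotonicity of the normal-cone operator then gives $(\lambda_{k}+r_{k}-\bar{\lambda},z_{k}-G(\bar{x}))_{Y}\geq 0$; dividing by $t_{k}^{2}$ and inserting $z_{k}-G(\bar{x})=t_{k}G^{\prime}(\bar{x})\delta_{k}+o(t_{k})$ produces the key inequality $(\gamma_{k},G^{\prime}(\bar{x})\delta_{k})_{Y}\geq o(1)$. Because $z_{k}\in K$ and $K-G(\bar{x})\subset{\cal T}_{K}(G(\bar{x}))$, the same expansion shows $\dist(G^{\prime}(\bar{x})\delta_{k},{\cal T}_{K}(G(\bar{x})))\to 0$, and the positively homogeneous estimate $\dist(d,\Omega)\leq\kappa\,\dist(G^{\prime}(\bar{x})d,{\cal T}_{K}(G(\bar{x})))$ underlying \eqref{ineq:dist_dPhi} (valid for all $d$ since both sides are cones) furnishes $\pi_{k}\in\Omega$ with $\Vert\delta_{k}-\pi_{k}\Vert_{X}\to 0$.

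For the primal estimate I would pair the relation $L_{xx}(\bar{v})\delta_{k}+G^{\prime}(\bar{x})^{\ast}\gamma_{k}\to 0$ with $\delta_{k}$; the cross term equals $(\gamma_{k},G^{\prime}(\bar{x})\delta_{k})_{Y}$, so the key inequality yields $\langle L_{xx}(\bar{v})\delta_{k},\delta_{k}\rangle_{X^{\ast},X}\leq o(1)$, and replacing $\delta_{k}$ by $\pi_{k}$ costs only $o(1)$ since $L_{xx}(\bar{v})$ is bounded. Using $f^{\prime}(\bar{x})=-G^{\prime}(\bar{x})^{\ast}\bar{\lambda}$ together with the tightness of complementarity (the monotonicity estimate shows $-(\bar{\lambda},z_{k}-G(\bar{x}))_{Y}/t_{k}\to 0$) forces $\langle f^{\prime}(\bar{x}),\pi_{k}\rangle_{X^{\ast},X}\to 0$, so whenever $\Vert\pi_{k}\Vert_{X}$ is bounded away from $0$ we have $\pi_{k}\in C_{\eta}(\bar{x})$ and (A2) gives $\langle L_{xx}(\bar{v})\pi_{k},\pi_{k}\rangle_{X^{\ast},X}\geq c\Vert\pi_{k}\Vert_{X}^{2}$, with \eqref{ineq:calF_Lxx} absorbing the near-critical directions. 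Comparing the two bounds forces $\Vert\pi_{k}\Vert_{X}\to 0$, hence $\Vert\delta_{k}\Vert_{X}\to 0$.

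It remains to show $\Vert\gamma_{k}\Vert_{Y}\to 0$, which I expect to be the main obstacle. Once $\Vert\delta_{k}\Vert_{X}\to 0$ is available, the linearized stationarity relation collapses to $G^{\prime}(\bar{x})^{\ast}\gamma_{k}\to 0$ in $X^{\ast}$, and the base point satisfies $\Vert z_{k}-G(\bar{x})\Vert_{Y}=o(t_{k})$ since $z_{k}-G(\bar{x})=t_{k}G^{\prime}(\bar{x})\delta_{k}+o(t_{k})$. The difficulty is that $\lambda_{k}+r_{k}$ lies in ${\cal N}_{K}(z_{k})$ at the drifting point $z_{k}$ rather than in ${\cal N}_{K}(G(\bar{x}))$, and normal cones are only outer semicontinuous, so the uniqueness of the multiplier guaranteed by (A1) yields at best a weak limit $0$, not the strong convergence the normalization demands; moreover the non-reflexivity of $X$ blocks a symmetric limiting treatment of $\delta_{k}$. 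To overcome this I would upgrade (A1) to a quantitative, neighborhood-robust dual error bound: applying Robinson's stability theorem (e.g.\ \cite[Theorem~2.87]{BoSh00}) to the SRCQ inclusion $0\in{\rm int}(G(\bar{x})+G^{\prime}(\bar{x})X-K_{0})$ provides $\beta>0$ and a uniform neighborhood on which $\Vert\lambda_{k}-\bar{\lambda}\Vert_{Y}\leq\beta\Vert G^{\prime}(\bar{x})^{\ast}(\lambda_{k}-\bar{\lambda})\Vert_{X^{\ast}}$ up to errors controlled by $\Vert r_{k}\Vert_{Y}$ and $\Vert z_{k}-G(\bar{x})\Vert_{Y}$, the uniformity of the neighborhood being exactly what absorbs the drift of $z_{k}$. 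Dividing by $t_{k}$ then gives $\Vert\gamma_{k}\Vert_{Y}\leq\beta\Vert G^{\prime}(\bar{x})^{\ast}\gamma_{k}\Vert_{X^{\ast}}+o(1)\to 0$, contradicting $\Vert\delta_{k}\Vert_{X}+\Vert\gamma_{k}\Vert_{Y}=1$ and proving the proposition. Making this quantitative dual estimate rigorous—pinning down the cone in which $\gamma_{k}$ asymptotically lives and showing the base-point drift is genuinely $o(t_{k})$—is the heart of the argument.
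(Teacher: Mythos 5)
Your proposal is correct in outline but takes a genuinely different route from the paper. The paper's own proof is essentially a citation: under (A1) and (A2) it invokes the primal--dual error bound of \cite[Theorem~3.2]{KaSt18}, which already gives $\Vert v-\bar v\Vert_{V}\leq m\sigma(v)$ on the set $\{v=(x,\lambda)\in V;\ \Vert x-\bar x\Vert_{X}\leq\gamma,\ \sigma(v)\leq\gamma\}$, and then merely observes that $\sigma(v)\to 0$ as $v\to\bar v$, so a small ball $B_{V}(\bar v,r)$ lies inside that set. You are instead re-deriving the cited theorem from scratch by a blow-up argument. Your primal half is sound: the normalization, the linearized stationarity $L_{xx}(\bar v)\delta_{k}+G^{\prime}(\bar x)^{\ast}\gamma_{k}\to 0$, the monotonicity inequality $(\gamma_{k},G^{\prime}(\bar x)\delta_{k})_{Y}\geq -o(1)$, the passage from $\delta_{k}$ to $\pi_{k}\in\Omega$ via the positively homogeneous Robinson estimate, and the SOSC contradiction all go through (the appeal to~\eqref{ineq:calF_Lxx} is not actually needed once you argue along a subsequence with $\Vert\pi_{k}\Vert_{X}$ bounded below). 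This buys a self-contained argument where the paper offers only a reference; the cost is that the hardest step is exactly the content of the cited theorem and must be written out.

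The one place you stop short of a proof --- as you yourself flag --- is the dual estimate $\Vert\gamma_{k}\Vert_{Y}\to 0$, and the inequality you posit is asserted rather than derived. For the record, it does close, by the standard quantitative use of the SRCQ: Robinson--Ursescu applied to $0\in{\rm int}(G(\bar x)+G^{\prime}(\bar x)X-K_{0})$ yields $\epsilon,M>0$ with $\epsilon B_{Y}\subset G^{\prime}(\bar x)(MB_{X})-\bigl((K_{0}-G(\bar x))\cap MB_{Y}\bigr)$. Writing any $y\in\epsilon B_{Y}$ as $y=G^{\prime}(\bar x)u-w$ with $w=q-G(\bar x)$, $q\in K_{0}$, the definition of $K_{0}$ gives $(\bar\lambda,w)_{Y}=0$, hence $(\gamma_{k},y)_{Y}=\langle G^{\prime}(\bar x)^{\ast}\gamma_{k},u\rangle_{X^{\ast},X}-t_{k}^{-1}(\lambda_{k},q-G(\bar x))_{Y}$; the variational inequality $(\lambda_{k}+r_{k},q-z_{k})_{Y}\leq 0$ together with $\Vert r_{k}\Vert_{Y}=o(t_{k})$ and $\Vert z_{k}-G(\bar x)\Vert_{Y}=o(t_{k})$ (the latter requires $\Vert\delta_{k}\Vert_{X}\to 0$ first, so the order primal-then-dual is essential) gives $(\gamma_{k},y)_{Y}\geq -M\Vert G^{\prime}(\bar x)^{\ast}\gamma_{k}\Vert_{X^{\ast}}-o(1)$ uniformly in $y$, and testing with $y=-\epsilon\gamma_{k}/\Vert\gamma_{k}\Vert_{Y}$ forces $\Vert\gamma_{k}\Vert_{Y}\to 0$. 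With that paragraph supplied, your argument is complete; without it, the heart of the proposition remains unproved.
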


\begin{proof}
Since (A1) and (A2) hold, we can utilize~\cite[Theorem~3.2]{KaSt18}. Therefore, there exist $m > 0$ and $\gamma > 0$ such that
\begin{align}
\Vert v - \bar{v} \Vert_{V} \leq m \sigma(v) \label{ineq:vv_sigma}
\end{align}
for all $v \in \{ v = (x, \lambda) \in V;  \Vert x - \bar{x} \Vert_{X} \leq \gamma,  \sigma(v) \leq \gamma \}$. Note that $\Vert x - \bar{x} \Vert_{X} \to 0$ and $\sigma(v) \to 0$ as $v \to \bar{v}$, where $v = (x, \lambda) \in V$. Thus, there exists $r > 0$ such that $\Vert x - \bar{x} \Vert_{X} \leq \gamma$ and $\sigma(v) \leq \gamma$ for all $v \in B_{V}(\bar{v}, r)$. It then follows from~\eqref{ineq:vv_sigma} that $\Vert v - \bar{v} \Vert_{V} \leq m \sigma(v)$ for all $v \in B_{V}(\bar{v}, r)$. Therefore, the proof is completed.
\end{proof}

\begin{proposition} \label{prop:sigma_Lipschitz}
There exist $m > 0$ and $r > 0$ such that $\sigma(v) \leq m \Vert v - \bar{v} \Vert_{V}$ for all $v \in B_{V}(\bar{v}, r)$.
\end{proposition}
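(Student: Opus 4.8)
The plan is to exploit two facts: that $\sigma(\bar{v}) = 0$ and that $\sigma$ is locally Lipschitz continuous at $\bar{v}$. Since $\Vert v - \bar{v} \Vert_{V}$ is exactly the increment away from $\bar{v}$, the asserted bound $\sigma(v) = \sigma(v) - \sigma(\bar{v}) \leq m \Vert v - \bar{v} \Vert_{V}$ will follow immediately once both facts are in hand. To organize the estimates, I would first decompose $\sigma$ into its two natural summands, writing $\sigma(v) = \sigma_{1}(v) + \sigma_{2}(v)$ with $\sigma_{1}(v) \coloneqq \Vert f^{\prime}(x) + G^{\prime}(x)^{\ast} \lambda \Vert_{X^{\ast}} = \Vert L_{x}(v) \Vert_{X^{\ast}}$ and $\sigma_{2}(v) \coloneqq \Vert G(x) - P_{K}(G(x) + \lambda) \Vert_{Y}$, and bound each separately. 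Note that this proposition does not invoke (A1) or (A2); it is a pure regularity statement relying only on the $C^{2}$ smoothness of $f$ and $G$ and on $\bar{v}$ being a KKT point, and it complements the error-bound estimate of Proposition~\ref{prop:error_bound} by providing the reverse inequality.

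First I would verify $\sigma(\bar{v}) = 0$. Because $\bar{v}$ is a KKT point, $L_{x}(\bar{v}) = f^{\prime}(\bar{x}) + G^{\prime}(\bar{x})^{\ast} \bar{\lambda} = 0$, so $\sigma_{1}(\bar{v}) = 0$. For the second summand I would use the standard characterization of the normal cone of a closed convex set in a Hilbert space through the metric projection: for $y \in K$, the inclusion $\bar{\lambda} \in {\cal N}_{K}(y)$ is equivalent to the variational inequality $(\bar{\lambda}, p - y)_{Y} \leq 0$ for all $p \in K$, which in turn is equivalent to $P_{K}(y + \bar{\lambda}) = y$. Applying this with $y = G(\bar{x})$ and using $\bar{\lambda} \in {\cal N}_{K}(G(\bar{x}))$ yields $P_{K}(G(\bar{x}) + \bar{\lambda}) = G(\bar{x})$, hence $\sigma_{2}(\bar{v}) = 0$ and therefore $\sigma(\bar{v}) = 0$.

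Next I would establish a local Lipschitz estimate for each summand on a suitable ball $B_{V}(\bar{v}, r)$. For $\sigma_{1}$, I would write $\sigma_{1}(v) = \Vert L_{x}(v) - L_{x}(\bar{v}) \Vert_{X^{\ast}}$ and split $L_{x}(v) - L_{x}(\bar{v}) = [f^{\prime}(x) - f^{\prime}(\bar{x})] + G^{\prime}(x)^{\ast}(\lambda - \bar{\lambda}) + [G^{\prime}(x)^{\ast} - G^{\prime}(\bar{x})^{\ast}]\bar{\lambda}$. Since $f$ and $G$ are twice continuously differentiable, $f^{\prime}$ and $G^{\prime}$ are continuously differentiable and hence locally Lipschitz near $\bar{x}$, which bounds the first and third bracketed terms by constant multiples of $\Vert x - \bar{x} \Vert_{X}$; the middle term is bounded by $\Vert G^{\prime}(x) \Vert_{X \to Y} \Vert \lambda - \bar{\lambda} \Vert_{Y}$, and continuity of $G^{\prime}$ makes $\Vert G^{\prime}(x) \Vert_{X \to Y}$ locally bounded near $\bar{x}$. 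Together these give $\sigma_{1}(v) \leq m_{1} \Vert v - \bar{v} \Vert_{V}$. For $\sigma_{2}$, I would write $\sigma_{2}(v) = \Vert [G(x) - P_{K}(G(x) + \lambda)] - [G(\bar{x}) - P_{K}(G(\bar{x}) + \bar{\lambda})] \Vert_{Y}$, apply the triangle inequality, bound the $G$-terms by local Lipschitz continuity of $G$, and bound $\Vert P_{K}(G(x) + \lambda) - P_{K}(G(\bar{x}) + \bar{\lambda}) \Vert_{Y}$ using the nonexpansiveness of the metric projection onto a closed convex set in a Hilbert space, obtaining a bound by $\Vert G(x) - G(\bar{x}) \Vert_{Y} + \Vert \lambda - \bar{\lambda} \Vert_{Y}$. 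This yields $\sigma_{2}(v) \leq m_{2} \Vert v - \bar{v} \Vert_{V}$.

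Finally I would add the two estimates, set $m \coloneqq m_{1} + m_{2}$, and take $r$ to be the minimum of the radii on which the local bounds hold. The only genuinely substantive step is the identity $\sigma(\bar{v}) = 0$, where the second summand requires the projection characterization of the normal cone; the remaining work is a routine assembly of Lipschitz estimates, the sole point needing slight care being the adjoint term $[G^{\prime}(x)^{\ast} - G^{\prime}(\bar{x})^{\ast}]\bar{\lambda}$, which is controlled by the local Lipschitz continuity of $G^{\prime}$ (equivalently of $G^{\prime\ast}$) guaranteed by the $C^{2}$ assumption.
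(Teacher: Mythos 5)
Your proposal is correct and follows essentially the same route as the paper: both rest on $L_{x}(\bar{v}) = 0$ and $G(\bar{x}) = P_{K}(G(\bar{x}) + \bar{\lambda})$ (the projection characterization of $\bar{\lambda} \in {\cal N}_{K}(G(\bar{x}))$), the local Lipschitz continuity of $L_{x}$ and $G$ coming from the $C^{2}$ smoothness, and the nonexpansiveness of $P_{K}$. Your finer splitting of $L_{x}(v) - L_{x}(\bar{v})$ into three bracketed terms is just an expanded version of the paper's single Lipschitz estimate for $L_{x}$, so the two arguments coincide in substance.
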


\begin{proof}
We first recall that $f$ and $G$ are twice continuously differentiable on $X$. These facts show that $L_{x}$ and $G$ are locally Lipschitz continuous at $\bar{v}$. Then, there exist $b > 0$ and $r > 0$ such that
\begin{align}
\Vert L_{x}(v) - L_{x} (\bar{v}) \Vert_{X^{\ast}} \leq b \Vert v - \bar{v} \Vert_{V}, \quad \Vert G(x) - G(\bar{x}) \Vert_{Y} \leq b \Vert x - \bar{x} \Vert_{X} \label{ineq:LLip}
\end{align}
for all $v \in B_{V}(\bar{v}, r)$. Let $m \coloneqq 3 b + 1$, and let $v = (x, \lambda) \in B_{V}(\bar{v}, r)$ be arbitrary. Notice that $L_{x}(\bar{v}) = 0$ and $G(\bar{x}) = P_{K}(G(\bar{x}) + \bar{\lambda})$. Then, using~\eqref{ineq:LLip} and the nonexpansiveness of $P_{K}$ yields
\begin{align*}
\sigma(v)
&= \Vert L_{x}(v) - L_{x}(\bar{v}) \Vert_{X^{\ast}} + \Vert G(x) - P_{K}(G(x) + \lambda) - G(\bar{x}) + P_{K}(G(\bar{x}) + \bar{\lambda}) \Vert_{Y}
\\
&\leq b \Vert v - \bar{v} \Vert_{V} + \Vert G(x) - G(\bar{x}) \Vert_{Y} + \Vert P_{K}(G(x) + \lambda) - P_{K}(G(\bar{x}) + \bar{\lambda}) \Vert_{Y}
\\
&\leq b \Vert v - \bar{v} \Vert_{V} + 2 \Vert G(x) - G(\bar{x}) \Vert_{Y} + \Vert \lambda - \bar{\lambda} \Vert_{Y}
\\
&\leq b \Vert v - \bar{v} \Vert_{V} + 2 b \Vert x - \bar{x} \Vert_{X} + \Vert \lambda - \bar{\lambda} \Vert_{Y}
\\
&\leq m \Vert v - \bar{v} \Vert_{V},
\end{align*}
that is, the desired result is obtained.
\end{proof}

\begin{lemma} \label{lemma:three_results}
Suppose that {\rm (A1)} and {\rm (A2)} are satisfied. For each $v \in V \backslash \Gamma$, let $w(v) = (d(v), \mu(v)) \in V$ be a $\sigma(v)^{6}$-optimal solution of ${\rm Q}(v)$. Then, the following statements hold:
\begin{itemize}
\item[{\rm (i)}] $\sigma(v) \Vert \mu(v) \Vert_{Y} \to 0$ and $\dist(G(\bar{x}) + G^{\prime}(\bar{x}) d(v), K) \to 0$ as $\Vert v - \bar{v} \Vert_{V} \to 0$;

\item[{\rm (ii)}] there exist $m > 0$ and $r > 0$ such that $\langle f^{\prime}(x), d(v) \rangle_{X^{\ast}, X} + \frac{1}{2} \langle L_{xx}(v) d(v), d(v) \rangle_{X^{\ast}, X} \leq m \sigma(v)$ for all $v = (x, \lambda) \in B_{V}(\bar{v}, r) \backslash \Gamma$.
\end{itemize}
\end{lemma}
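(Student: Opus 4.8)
The plan is to reduce both parts to a single upper estimate on the value of Q$(v)$, namely $\varphi(w(v)) \leq m\sigma(v)$ for $v$ near $\bar{v}$, where I write $\varphi$ for the objective function of Q$(v)$, and then read off (i) and (ii) from it. The mechanism is to test the $\sigma(v)^{6}$-optimality of $w(v)$ against a cheap, explicitly available feasible point of the form $(0, \mu_{0})$, of exactly the type already used to prove nonemptiness of the feasible set in Theorem~\ref{theorem:eps_optimal}.

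First I would fix $w_{0} := (0, \mu_{0})$ with $\mu_{0} := \frac{1}{\sigma(v)}(G(x) - P_{K}(G(x))) + \lambda$, which is feasible for Q$(v)$ since $G(x) - \sigma(v)(\mu_{0} - \lambda) = P_{K}(G(x)) \in K$ and $\Vert 0 \Vert_{X} \leq \nu$. Its objective value is $\varphi(w_{0}) = \frac{\sigma(v)}{2}\Vert \mu_{0} \Vert_{Y}^{2}$, so everything rests on bounding $\sigma(v)\Vert \mu_{0} \Vert_{Y}$. Here the definition~\eqref{def:sigma} of $\sigma$ is decisive: since $P_{K}(G(x) + \lambda) \in K$, one has $\dist(G(x), K) \leq \Vert G(x) - P_{K}(G(x) + \lambda) \Vert_{Y} \leq \sigma(v)$, whence $\sigma(v)\Vert \mu_{0} \Vert_{Y} \leq \dist(G(x), K) + \sigma(v)\Vert \lambda \Vert_{Y} \leq \sigma(v)(1 + \Vert \lambda \Vert_{Y})$. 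For $v \in B_{V}(\bar{v}, r)$ the factor $\Vert \lambda \Vert_{Y}$ is bounded by a constant $M$, so $\varphi(w_{0}) \leq \frac{1}{2}M^{2}\sigma(v)$. Shrinking $r$ via Proposition~\ref{prop:sigma_Lipschitz} so that $\sigma(v) \leq 1$ on $B_{V}(\bar{v}, r)$, the approximation error obeys $\sigma(v)^{6} \leq \sigma(v)$, and $\sigma(v)^{6}$-optimality yields $\varphi(w(v)) \leq \varphi(w_{0}) + \sigma(v)^{6} \leq m\sigma(v)$. Part~(ii) is then immediate, because the objective part equals $\varphi(w(v)) - \frac{\sigma(v)}{2}\Vert \mu(v) \Vert_{Y}^{2} \leq \varphi(w(v)) \leq m\sigma(v)$, the dropped penalty term being nonnegative.

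For part~(i), the first claim $\sigma(v)\Vert \mu(v) \Vert_{Y} \to 0$ is obtained by isolating the penalty term, $\frac{\sigma(v)}{2}\Vert \mu(v) \Vert_{Y}^{2} = \varphi(w(v)) - \langle f^{\prime}(x), d(v) \rangle_{X^{\ast}, X} - \frac{1}{2}\langle L_{xx}(v) d(v), d(v) \rangle_{X^{\ast}, X}$. Because $\Vert d(v) \Vert_{X} \leq \nu$ and $f^{\prime}, L_{xx}$ are bounded near $\bar{v}$ by continuity, the objective part is bounded below by a constant $-C$, so $\frac{\sigma(v)}{2}\Vert \mu(v) \Vert_{Y}^{2} \leq m\sigma(v) + C$. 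The point is \emph{not} to divide by $\sigma(v)$ (which would diverge) but to multiply by it: this gives $(\sigma(v)\Vert \mu(v) \Vert_{Y})^{2} \leq 2\sigma(v)(m\sigma(v) + C) \to 0$ as $\sigma(v) \to 0$, hence as $v \to \bar{v}$ by Proposition~\ref{prop:sigma_Lipschitz}. The second claim then follows from feasibility of $d(v)$: since $G(x) + G^{\prime}(x) d(v) - \sigma(v)(\mu(v) - \lambda) \in K$, the quantity $\dist(G(\bar{x}) + G^{\prime}(\bar{x}) d(v), K)$ is at most $\Vert G(\bar{x}) - G(x) \Vert_{Y} + \Vert G^{\prime}(\bar{x}) - G^{\prime}(x) \Vert_{X \to Y}\nu + \sigma(v)\Vert \mu(v) \Vert_{Y} + \sigma(v)\Vert \lambda \Vert_{Y}$, and every term tends to $0$ as $v \to \bar{v}$ by continuity of $G, G^{\prime}$ together with the first claim.

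I expect the main obstacle to be the correct handling of the penalty term $\frac{\sigma(v)}{2}\Vert \mu(v) \Vert_{Y}^{2}$: a direct attempt to control $\Vert \mu(v) \Vert_{Y}$ fails, since dividing the estimate by $\sigma(v)$ lets it blow up, and the resolution — proving only the weaker but sufficient statement $\sigma(v)\Vert \mu(v) \Vert_{Y} \to 0$ by multiplying rather than dividing — is the conceptual crux. A secondary technical point is choosing a feasible test point whose objective is genuinely $O(\sigma(v))$; this hinges on extracting $\dist(G(x), K) \leq \sigma(v)$ directly from the definition~\eqref{def:sigma} of $\sigma$, and not on any regularity of the constraint.
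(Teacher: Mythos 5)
Your proof is correct, and its skeleton coincides with the paper's: both test the $\sigma(v)^{6}$-optimality of $w(v)$ against the same feasible point $(0,\tfrac{1}{\sigma(v)}(G(x)-P_{K}(G(x)))+\lambda)$, both obtain $\sigma(v)\Vert\mu(v)\Vert_{Y}\to 0$ by multiplying (not dividing) the resulting estimate by $\sigma(v)$, and both get the distance claim from feasibility of $w(v)$ plus nonexpansiveness of the distance function. The one genuine divergence is how the test point's objective value is shown to be $O(\sigma(v))$. The paper bounds $\Vert G(x)+\sigma(v)\lambda-P_{K}(G(x))\Vert_{Y}$ by writing $G(\bar{x})=P_{K}(G(\bar{x}))$, invoking nonexpansiveness of $P_{K}$, and crucially using $\Vert G(x)-G(\bar{x})\Vert_{Y}\leq m_{2}\sigma(v)$, which it derives from the error bound of Proposition~\ref{prop:error_bound} and hence from (A1) and (A2). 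You instead read off $\Vert G(x)-P_{K}(G(x))\Vert_{Y}=\dist(G(x),K)\leq\Vert G(x)-P_{K}(G(x)+\lambda)\Vert_{Y}\leq\sigma(v)$ directly from the definition~\eqref{def:sigma}, which is both shorter and assumption-free; as you note, this means the estimates of the lemma (in particular item~(ii) and the value bound driving item~(i)) do not actually require the SRCQ or the SOSC, only continuity of the data near $\bar{v}$. What the paper's route buys is uniformity with the rest of Section~\ref{sec:local_convergence}, where everything is funneled through the $\sigma$-error-bound machinery; what your route buys is a strictly weaker hypothesis set for this particular lemma and one fewer appeal to Proposition~\ref{prop:error_bound}. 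No gaps.
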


\begin{proof}
Since (A1) and (A2) hold, Proposition~\ref{prop:error_bound} ensures that there exist $m_{1} > 0$ and $r_{1} > 0$ such that for all $v \in B_{V}(\bar{v}, r_{1})$,
\begin{align}
\Vert v - \bar{v} \Vert_{V} \leq m_{1} \sigma(v). \label{ineq:error_bound}
\end{align}
By the differentiability of $G$, there exists $r_{2} \in (0, r_{1})$ such that for all $x \in B_{X}(\bar{x}, r_{2})$, $\Vert G(x) - G(\bar{x}) - G^{\prime}(\bar{x}) (x - \bar{x}) \Vert_{Y} \leq \Vert x - \bar{x} \Vert_{X}$. It then follows from~\eqref{ineq:error_bound} that for all $v \in B_{V}(\bar{v}, r_{2})$,
\begin{align}
\Vert G(x) - G(\bar{x}) \Vert_{Y} \leq m_{2} \sigma(v), \label{ineq:G_sigma}
\end{align}
where $m_{2} \coloneqq m_{1} (1 + \Vert G^{\prime}(\bar{x}) \Vert_{X \to Y})$. Let $v = (x, \lambda) \in V \backslash \Gamma$ be arbitrary. We note that $\sigma(v) > 0$ holds, $(0, \frac{1}{\sigma(v)}G(x) + \lambda - \frac{1}{\sigma(v)} P_{K}(G(x)) )$ is a feasible solution of Q$(v)$, and $w(v) = (d(v), \mu(v)) \in V$ is a $\sigma(v)^{6}$-optimal solution of Q$(v)$. These facts derive
\begin{align}
&\langle f^{\prime}(x), d(v) \rangle_{X^{\ast}, X} + \frac{1}{2} \langle L_{xx}(v) d(v), d(v) \rangle_{X^{\ast}, X} + \frac{\sigma(v)}{2} \Vert \mu(v) \Vert_{Y}^{2} \nonumber
\\
& < \frac{1}{2 \sigma(v)} \left\Vert G(x) + \sigma(v) \lambda - P_{K}(G(x)) \right\Vert_{Y}^{2} + \sigma(v)^{6}. \label{ineq:e-optimal}
\end{align}
Using~\eqref{ineq:e-optimal} and $d(v) \in \nu B_{X}$ yields
\begin{align}
\sigma(v)^{2} \Vert \mu(v) \Vert_{Y}^{2} 
& \leq 2 \sigma(v) \left( \nu \Vert f^{\prime}(x) \Vert_{X^{\ast}} + \frac{\nu^{2}}{2} \Vert L_{xx}(v) \Vert_{X \to X^{\ast}} \right)  + \Vert G(x) + \sigma(v) \lambda - P_{K}(G(x)) \Vert_{Y}^{2} + 2 \sigma(v)^{7}. \label{ineq:sigma_zeta}
\end{align}
Now, we recall that $\sigma(v) \to 0$ and $G(x) + \sigma(v) \lambda - P_{K}(G(x)) \to G(\bar{x}) - P_{K}(G(\bar{x})) = 0$ if $\Vert v - \bar{v} \Vert_{V} \to 0$. Thus, inequality~\eqref{ineq:sigma_zeta} implies $\sigma(v) \Vert \mu(v) \Vert_{Y} \to 0$ as $\Vert v - \bar{v} \Vert_{V} \to 0$.
\par
From $\dist( G(x) + G^{\prime}(x) d(v) - \sigma(v) (\mu(v) - \lambda), K ) = 0$, $d(v) \in \nu B_{X}$, the nonexpansiveness of the distance function, and the continuity of $G$ and $G^{\prime}$, we obtain
\begin{align*}
 |\dist( G(\bar{x}) + G^{\prime}(\bar{x}) d(v), K )|
&= |\dist( G(\bar{x}) + G^{\prime}(\bar{x}) d(v), K ) - \dist( G(x) + G^{\prime}(x) d(v) - \sigma(v) (\mu(v) - \lambda), K )|
\\
&\leq \Vert G(x) - G(\bar{x}) \Vert_{Y} + \nu \Vert G^{\prime}(x) - G^{\prime}(\bar{x}) \Vert_{X \to Y} + \sigma(v) \Vert \mu(v) \Vert_{Y} + \sigma(v) \Vert \lambda \Vert_{Y}.
\end{align*}
The inequality and the continuity of $G$ and $G^{\prime}$ derive $\dist( G(\bar{x}) + G^{\prime}(\bar{x}) d(v), K ) \to 0$ as $\Vert v - \bar{v} \Vert_{V} \to 0$.
\par
We define $m \coloneqq 1 + \frac{1}{2} (\Vert \bar{\lambda} \Vert_{Y} + r_{2} + 2 m_{2})^{2}$. Since $\sigma(v) \to 0$ as $\Vert v - \bar{v} \Vert_{V} \to 0$, there exists $r \in (0, r_{2})$ such that $\sigma(v) \leq 1$ for all $v \in B_{V}(\bar{v}, r)$. Let $v = (x, \lambda) \in B_{V}(\bar{v}, r) \backslash \Gamma$ be arbitrary. Then, it is clear that $v \in B_{V}(\bar{v}, r_{2}) \backslash \Gamma$, and hence inequality~\eqref{ineq:G_sigma} is verified. It then follows from $G(\bar{x}) = P_{K}(G(\bar{x}))$ and the nonexpansiveness of $P_{K}$ that
\begin{align}
 \left\Vert G(x) + \sigma(v) \lambda - P_{K}(G(x)) \right\Vert_{Y}
& \leq \sigma(v) \Vert \lambda \Vert_{Y} + \Vert G(x) - G(\bar{x}) \Vert_{Y} + \Vert P_{K}(G(x)) - P_{K}(G(\bar{x})) \Vert_{Y} \nonumber
\\
&\leq \sigma(v) ( \Vert \lambda \Vert_{Y} + 2 m_{2} ) \nonumber
\\
&\leq \sigma(v) ( \Vert \bar{\lambda} \Vert_{Y} + r_{2} + 2m_{2} ), \label{ineq:G_PK_sigma}
\end{align}
where the last inequality is derived from $\Vert \lambda \Vert_{Y} \leq \Vert \bar{\lambda} \Vert_{Y} + \Vert \lambda - \bar{\lambda} \Vert_{Y} \leq \Vert \bar{\lambda} \Vert_{Y} + \Vert v - \bar{v} \Vert_{V} \leq \Vert \bar{\lambda} \Vert_{Y} + r \leq \Vert \bar{\lambda} \Vert_{Y} + r_{2}$.
Since $0 \leq \frac{\sigma(v)}{2} \Vert \mu \Vert_{Y}^{2}$ and $\sigma(v)^{5} \leq 1$ hold, combining~\eqref{ineq:e-optimal} and~\eqref{ineq:G_PK_sigma} yields $\langle f^{\prime}(x), d(v) \rangle_{X^{\ast}, X} + \frac{1}{2} \langle L_{xx}(v) d(v), d(v) \rangle_{X^{\ast}, X} < \frac{1}{2} (\Vert \bar{\lambda} \Vert_{Y} + r_{2} + 2m_{2})^{2} \sigma(v) + \sigma(v)^{6} \leq m \sigma(v)$.
\end{proof}

\noindent
Since problem~Q$(v)$ admits a $\sigma(v)^{6}$-optimal solution $w(v) = (d(v), \mu(v))$, the key to the existence of approximate solutions of~P$(v)$ depends on whether $\Vert d(v) \Vert_{X} < \nu$ holds or not. The next proposition shows that $\Vert d(v) \Vert_{X} < \nu$ is ensured for any $v$ sufficiently close to $\bar{v}$ and brings some important properties. Hereafter, the notation $\bar{w} \coloneqq (0, \bar{\lambda}) \in V$ is used in the subsequent argument.

\begin{proposition} \label{proposition:xi_zero}
Suppose that {\rm (A1)} and {\rm (A2)} hold. For any $v = (x, \lambda) \in V \backslash \Gamma$, let $w(v) = (d(v), \mu(v)) \in V$ denote a $\sigma(v)^{6}$-optimal solution of ${\rm Q}(v)$. Then, the following statements hold:
\begin{itemize}
\item[{\rm (i)}] If $\Vert v - \bar{v} \Vert_{V} \to 0$, then $\Vert w(v) - \bar{w} \Vert_{V} \to 0$;

\item[{\rm (ii)}] there exists $\gamma > 0$ such that for any $v \in B_{V}(\bar{v}, \gamma) \backslash \Gamma$, problem~${\rm Q}(v)$ has another $\sigma(v)^{6}$-optimal solution $\widehat{w}(v) = (\widehat{d}(v), \widehat{\mu}(v)) \in V$, and there exists $\rho(v) \in Y$ satisfying
\begin{align}
& \Vert w(v) - \widehat{w}(v) \Vert_{V} \leq \sigma(v)^{3}, \label{ineq:sigma4_1}
\\
& \Vert f^{\prime}(x) + L_{xx}(v) \widehat{d}(v) + G^{\prime}(x)^{\ast} \rho(v) \Vert_{X^{\ast}} \leq \sigma(v)^{3}, \label{ineq:sigma4_2}
\\
& \Vert \rho(v) - \widehat{\mu}(v) \Vert_{Y} \leq \sigma(v)^{2}, \label{ineq:sigma4_3}
\\
& \rho(v) \in {\cal N}_{K}( G(x) + G^{\prime}(x) \widehat{d}(v) - \sigma(v) (\widehat{\mu}(v) - \lambda) ). \label{ineq:sigma4_4}
\end{align}
\end{itemize}
\end{proposition}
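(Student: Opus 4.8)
The plan is to prove the three conclusions in the logical order: first the \emph{primal} convergence $d(v)\to 0$, then the approximate stationarity system~\eqref{ineq:sigma4_1}--\eqref{ineq:sigma4_4} of~(ii) together with the interiority $\Vert\widehat d(v)\Vert_X<\nu$, and finally the \emph{multiplier} convergence $\mu(v)\to\bar\lambda$ needed to complete~(i). The primal step rests on the SOSC, the system of~(ii) is essentially a repackaging of Theorem~\ref{theorem:eps_optimal} with $\varepsilon=\sigma(v)^6$ once interiority is known, and the multiplier step is obtained by feeding a suitable point into the error bound of Proposition~\ref{prop:error_bound}.

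For $d(v)\to 0$ I would combine Lemma~\ref{lemma:three_results} with the second-order data. Lemma~\ref{lemma:three_results}(i) gives $\dist(G(\bar x)+G'(\bar x)d(v),K)\to 0$, and since $d(v)\in\nu B_X\subset\nu_1 B_X$, inequality~\eqref{ineq:dist_dPhi} yields $\dist(d(v),\Omega)\to 0$. From the KKT identity $f'(\bar x)=-G'(\bar x)^\ast\bar\lambda$ and $\bar\lambda\in\mathcal N_K(G(\bar x))$ one gets $\langle f'(\bar x),d\rangle_{X^\ast,X}\geq 0$ for every $d\in\Omega$, so a \emph{near-projection} $p(v)\in\Omega$ of $d(v)$ produces $\langle f'(\bar x),d(v)\rangle_{X^\ast,X}\geq-\epsilon_v$ with $\epsilon_v\to 0$. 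Meanwhile Lemma~\ref{lemma:three_results}(ii), the continuity of $f'$ and $L_{xx}$, and $\Vert d(v)\Vert_X\leq\nu$ let me replace $(x,v)$ by $(\bar x,\bar v)$ up to $o(1)$ and conclude $\langle f'(\bar x),d(v)\rangle_{X^\ast,X}+\tfrac12\langle L_{xx}(\bar v)d(v),d(v)\rangle_{X^\ast,X}\to 0$ from above; together with the lower bound this forces $\langle L_{xx}(\bar v)d(v),d(v)\rangle_{X^\ast,X}\to 0$. I would then argue by contradiction: if $\Vert d(v)\Vert_X\geq\delta>0$ along a subsequence, use~\eqref{ineq:calF_Lxx} to bound $\langle f'(\bar x),d(v)\rangle_{X^\ast,X}$ from above and transfer the estimate to $p(v)$ to verify $p(v)\in C_\eta(\bar x)$ for large indices; the SOSC~(A2) then gives $\langle L_{xx}(\bar v)p(v),p(v)\rangle_{X^\ast,X}\geq c\Vert p(v)\Vert_X^2\geq c\delta^2/4>0$, while vanishing cross terms force this quantity to $0$, a contradiction. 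Hence $\Vert d(v)\Vert_X\to 0$.

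With $d(v)\to 0$ in hand, Theorem~\ref{theorem:eps_optimal} applied with $\varepsilon=\sigma(v)^6$ (so $\sqrt\varepsilon=\sigma(v)^3$) yields $\widehat w(v)$ and $\rho(v)$ giving~\eqref{ineq:sigma4_1}, \eqref{ineq:sigma4_4}, the relation $\sigma(v)\Vert\rho(v)-\widehat\mu(v)\Vert_Y\leq\sigma(v)^3$ (which becomes~\eqref{ineq:sigma4_3} after dividing by $\sigma(v)>0$), and a distance estimate to $\mathcal N_{\nu B_X}(\widehat d(v))$. Since $\Vert\widehat d(v)\Vert_X\leq\Vert d(v)\Vert_X+\sigma(v)^3\to 0$, there is $\gamma>0$ with $\Vert\widehat d(v)\Vert_X<\nu$ on $B_V(\bar v,\gamma)\setminus\Gamma$, so $\widehat d(v)$ is interior to $\nu B_X$, $\mathcal N_{\nu B_X}(\widehat d(v))=\{0\}$, and the distance estimate upgrades to~\eqref{ineq:sigma4_2}. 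For the multiplier part of~(i) (equivalent to $\rho(v)\to\bar\lambda$ via~\eqref{ineq:sigma4_1} and~\eqref{ineq:sigma4_3}) I would set $\hat v:=(x+\widehat d(v),\rho(v))$ and estimate $\sigma(\hat v)$: its dual residual is controlled by~\eqref{ineq:sigma4_2} plus Taylor expansions of $f'$ and $G'$, and its primal residual by~\eqref{ineq:sigma4_4}, the projection identity $z(v)=P_K(z(v)+\rho(v))$, and $\sigma(v)\Vert\mu(v)\Vert_Y\to 0$ from Lemma~\ref{lemma:three_results}(i). Establishing $\sigma(\hat v)\to 0$ and $\Vert x+\widehat d(v)-\bar x\Vert_X\to 0$, the error bound of Proposition~\ref{prop:error_bound} gives $\Vert\hat v-\bar v\Vert_V\leq m\,\sigma(\hat v)\to 0$ and hence $\rho(v)\to\bar\lambda$ in the strong sense $\Vert\cdot\Vert_Y$.

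The main obstacle is this last step, and concretely the boundedness of $\Vert\rho(v)\Vert_Y$ as $v\to\bar v$: the dual-residual estimate for $\sigma(\hat v)$ contains the term $(G'(x+\widehat d(v))-G'(x))^\ast\rho(v)$, which I can only render $o(1)$ after knowing $\Vert\widehat d(v)\Vert_X\,\Vert\rho(v)\Vert_Y\to 0$. In contrast to the finite-dimensional setting this bound is not automatic, and I would derive it from the RCQ implied by~(A1) via Robinson stability of the multiplier set, which supplies a uniform bound on approximate multipliers associated with constraint linearizations near $G(\bar x)$. The closedness of the graph of $\mathcal N_K$ would alternatively identify any weak limit of $\rho(v)$ with the unique element $\bar\lambda$ of $\Lambda(\bar x)$ under the SRCQ, but the error-bound route is preferable because it delivers the \emph{strong} convergence demanded by $\Vert\cdot\Vert_V$ directly.
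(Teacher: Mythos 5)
Your primal step ($\Vert d(v)\Vert_{X}\to 0$) and your derivation of item~(ii) are essentially the paper's own argument: the contradiction argument built on a near-projection of $d(v)$ onto $\Omega$, membership in $C_{\eta}(\bar{x})$, the SOSC, and inequalities~\eqref{ineq:dist_dPhi} and~\eqref{ineq:calF_Lxx} is exactly what the paper does (the paper splits into the cases $\langle f^{\prime}(\bar{x}), d_{j}\rangle_{X^{\ast},X} > \frac{\eta}{4}\Vert d_{j}\Vert_{X}$ and its negation, whereas you absorb the first case into the second by deriving the linear bound from~\eqref{ineq:calF_Lxx} and the $\limsup$ estimate; this is a harmless reorganization). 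Item~(ii) is likewise the same specialization of Theorem~\ref{theorem:eps_optimal} with $\varepsilon = \sigma(v)^{6}$ together with ${\cal N}_{\nu B_{X}}(\widehat{d}(v)) = \{0\}$ once $\Vert\widehat{d}(v)\Vert_{X} < \nu$.

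The genuine gap is in the multiplier convergence $\mu(v)\to\bar{\lambda}$, and it is twofold. First, your route needs $\Vert\rho(v)\Vert_{Y}$ bounded, which you defer to ``Robinson stability of the multiplier set'' without pinning it down. Made precise under~(A1), the relevant stability statement is the upper Lipschitz continuity at $(0,\bar{u})$ of the multiplier map of the linearized problem, as in~\cite[Lemma~4.44 and Proposition~4.47]{BoSh00}: this yields $\rho(v)\in\{\bar{\lambda}\} + m_{2}(\Vert\widehat{d}(v)\Vert_{X} + \Vert u(v)-\bar{u}\Vert_{U})B_{Y}$ for a suitable parameterization $u(v)$ of the perturbations, i.e.\ it already gives the strong convergence $\rho(v)\to\bar{\lambda}$ outright --- which is precisely the paper's argument and makes your detour through $\sigma(\hat{v})$ unnecessary. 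If instead you extract only a uniform bound $\Vert\rho(v)\Vert_{Y}\leq M$, you hit the second problem: Proposition~\ref{prop:error_bound} as stated holds only for $v\in B_{V}(\bar{v},r)$, and since $\Vert\cdot\Vert_{V}$ is the sum of the component norms, putting $\hat{v}=(x+\widehat{d}(v),\rho(v))$ into that ball requires $\Vert\rho(v)-\bar{\lambda}\Vert_{Y}\leq r$, which is essentially what you are trying to prove. (The underlying error bound of~\cite[Theorem~3.2]{KaSt18} is valid on the larger set $\{\Vert x-\bar{x}\Vert_{X}\leq\gamma,\ \sigma(v)\leq\gamma\}$ with no restriction on the multiplier component, which would rescue the argument, but that is not available from the statement of Proposition~\ref{prop:error_bound} alone.) As written, then, the error-bound route is circular, and the natural repair --- invoking the SRCQ-based upper Lipschitz property of the linearized multiplier map --- collapses your proof into the paper's.
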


\begin{proof}
Let us verify that $\Vert d(v) \Vert_{X} \to 0$ if $\Vert v - \bar{v} \Vert_{V} \to 0$. We prove this assertion by contradiction. Then, there exist $\varepsilon \in (0, \nu)$ and $\{ v_{j} \coloneqq (x_{j}, \lambda_{j}) \} \subset V \backslash \Gamma$ such that
\begin{align}
\Vert v_{j} - \bar{v} \Vert_{V} \leq \frac{\nu}{j}, \quad \varepsilon \leq \Vert d(v_{j}) \Vert_{X} \leq \nu \label{ineq:vjxij}
\end{align}
for $j \in \mathbb{N}$. We denote $j \in \mathbb{N}$, $\sigma_{j} \coloneqq \sigma(v_{j})$, $w_{j} \coloneqq w(v_{j})$, $d_{j} \coloneqq d(v_{j})$, and $\mu_{j} \coloneqq \mu(v_{j})$. Let $m_{1} > 0$ and $r_{1} > 0$ be constants described in item~(ii) of Lemma~\ref{lemma:three_results}. We have from~\eqref{ineq:vjxij} that $\Vert v_{j} - \bar{v} \Vert_{V} \to 0$ as $j \to \infty$, namely, there exists $n_{1} \in \mathbb{N}$ such that $v_{j} \in B_{V}(\bar{v}, r_{1}) \backslash \Gamma$ for $j \geq n_{1}$. Hence, item~(ii) of Lemma~\ref{lemma:three_results} yields $\langle f^{\prime}(x_{j}), d_{j} \rangle_{X^{\ast}, X} + \frac{1}{2} \langle L_{xx}(v_{j}) d_{j}, d_{j} \rangle_{X^{\ast},X} \leq m_{1} \sigma_{j}$ for $j \geq n_{1}$. Then, taking $\limsup$ implies
\begin{align}
\limsup_{j \to \infty} \left[ \langle f^{\prime}(x_{j}), d_{j} \rangle_{X^{\ast}, X} + \frac{1}{2} \langle L_{xx}(v_{j}) d_{j}, d_{j} \rangle_{X^{\ast},X} \right] \leq 0. \label{ineq:limsup_fL}
\end{align}
Since $\Vert d_{j} \Vert_{X} \leq \nu$ holds by~\eqref{ineq:vjxij}, we obtain
\begin{align}
& \langle f^{\prime}(x_{j}), d_{j} \rangle_{X^{\ast}, X} + \frac{1}{2} \langle L_{xx}(v_{j}) d_{j}, d_{j} \rangle_{X^{\ast},X} \nonumber
\\
& = \langle f^{\prime}(\bar{x}), d_{j} \rangle_{X^{\ast}, X} + \frac{1}{2} \langle L_{xx} (\bar{v}) d_{j}, d_{j} \rangle_{X^{\ast}, X} + \langle f^{\prime}(x_{j}) - f^{\prime}(\bar{x}), d_{j} \rangle_{X^{\ast}, X} + \frac{1}{2} \langle [ L_{xx} (v) - L_{xx} (\bar{v}) ] d_{j}, d_{j} \rangle_{X^{\ast}, X} \nonumber
\\
& \geq \langle f^{\prime}(\bar{x}), d_{j} \rangle_{X^{\ast}, X} + \frac{1}{2} \langle L_{xx} (\bar{v}) d_{j}, d_{j} \rangle_{X^{\ast}, X} - \nu \Vert f^{\prime}(x_{j}) - f^{\prime}(\bar{x}) \Vert_{X^{\ast}} - \frac{\nu^{2}}{2} \Vert L_{xx}(v_{j}) - L_{xx}(\bar{v}) \Vert_{X \to X^{\ast}}. \label{ineq:fLnu}
\end{align} 
Now, we recall that \eqref{ineq:dist_dPhi} and~\eqref{ineq:calF_Lxx} hold for $d = d_{j}$ thanks to $d_{j} \in \nu B_{X}$.
Let ${\cal I} \coloneqq \{ i \in \mathbb{N}; \langle f^{\prime}(\bar{x}), d_{i} \rangle_{X^{\ast}, X} > \frac{\eta}{4} \Vert d_{i} \Vert_{X} \}$ and ${\cal J} \coloneqq \mathbb{N} \backslash {\cal I}$. In what follows, we consider two cases: (a) $\mbox{card}({\cal I}) < \infty$; (b) $\mbox{card}({\cal I}) = \infty$.
\begin{itemize}
\item[(a)] In this case, there exists $n_{2} \in \mathbb{N}$ such that $j \in {\cal J}$ for $j \geq n_{2}$. It follows from~\eqref{ineq:vjxij} and item~(i) of Lemma~\ref{lemma:three_results} that 
\begin{align}
\tau_{j} \coloneqq \dist(G(\bar{x}) + G^{\prime}(\bar{x}) d_{j}, K) \to 0 \quad (j \to \infty). \label{def:alpha}
\end{align}
From~\eqref{ineq:dist_dPhi},~\eqref{def:alpha}, and the definitions of $\Omega$ and $\dist(d_{j}, \Omega)$, there exists $h_{j} \in X$ such that
\begin{align}
G^{\prime}(\bar{x}) h_{j} \in T_{K}(G(\bar{x})), ~ \Vert d_{j} - h_{j} \Vert_{X} < \dist(d_{j}, \Omega) + \frac{1}{j} \leq \kappa \tau_{j} + \frac{1}{j} \to 0 \quad (j \to \infty). \label{ineq:dh_j_0}
\end{align}
Since $\Vert d_{j} - h_{j} \Vert_{X} \to 0$ as $j \to \infty$, there exists $n_{3} \in \mathbb{N}$ such that for $j \geq n_{3}$,
\begin{align}
\Vert d_{j} - h_{j} \Vert_{X} \leq \min \left\{ \frac{\varepsilon}{2} , \, \frac{\eta \varepsilon}{4(1 + \Vert f^{\prime}(\bar{x}) \Vert_{X^{\ast}})} \right\}. \label{ineq:tau_dh}
\end{align}
Let $j \geq \max \{ n_{2}, n_{3} \}$ be an arbitrary integer. From now on, we show $h_{j} \in C_{\eta}(\bar{x})$. It is sufficient to verify $\langle f^{\prime}(\bar{x}), h_{j} \rangle_{X^{\ast}, X} \leq \eta \Vert h_{j} \Vert_{X}$ thanks to the former result of~\eqref{ineq:dh_j_0}. Since $\Vert d_{j} - h_{j}\Vert_{X} \leq \frac{\varepsilon}{2} \leq \frac{1}{2} \Vert d_{j} \Vert_{X}$ holds by~\eqref{ineq:vjxij}~and~\eqref{ineq:tau_dh}, we obtain
\begin{align}
& \frac{1}{2} \Vert d_{j} \Vert_{X} = \Vert d_{j} \Vert_{X} - \frac{1}{2} \Vert d_{j} \Vert_{X} \leq \Vert d_{j} \Vert_{X} - \Vert d_{j} - h_{j} \Vert_{X} \leq \Vert h_{j} \Vert_{X}, \label{ineq:dh1}
\\
& \Vert h_{j} \Vert_{X} \leq \Vert d_{j} - h_{j} \Vert_{X} + \Vert d_{j} \Vert_{X} \leq \frac{1}{2} \Vert d_{j} \Vert_{X} + \Vert d_{j} \Vert_{X} = \frac{3}{2} \Vert d_{j} \Vert_{X}. \label{ineq:dh2}
\end{align}
We recall that $\langle f^{\prime}(\bar{x}), d_{j} \rangle_{X^{\ast}, X} \leq \frac{\eta}{4} \Vert d_{j} \Vert_{X}$ by $j \in {\cal J}$. It then follows from \eqref{ineq:vjxij},~\eqref{ineq:tau_dh}, and~\eqref{ineq:dh1} that $\langle f^{\prime}(\bar{x}), h_{j} \rangle_{X^{\ast}, X} \leq \langle f^{\prime}(\bar{x}), d_{j} \rangle_{X^{\ast}, X} + \Vert f^{\prime}(\bar{x}) \Vert_{X^{\ast}} \Vert d_{j} - h_{j} \Vert_{X} \leq \frac{\eta}{4} \Vert d_{j} \Vert_{X} + \frac{\eta}{4} \Vert d_{j} \Vert_{X} \leq \eta \Vert h_{j} \Vert_{X}$, that is, $h_{j} \in C_{\eta}(\bar{x})$. Hence, combining~(A2) and~\eqref{ineq:dh1} implies
\begin{align}
\langle L_{xx}(\bar{v}) h_{j}, h_{j} \rangle_{X^{\ast}, X} \geq c \Vert h_{j} \Vert_{X}^{2} \geq \frac{c}{4} \Vert d_{j} \Vert_{X}^{2}. \label{ineq:Lxxc}
\end{align}
We have from $\langle f^{\prime}(\bar{x}), d_{j} \rangle_{X^{\ast}, X} = - \langle G^{\prime}(\bar{x})^{\ast} \bar{\lambda}, d_{j} \rangle_{X^{\ast}, X}$, \eqref{ineq:vjxij}, \eqref{ineq:dh2}, and~\eqref{ineq:Lxxc} that 
\begin{align}
& \langle f^{\prime}(\bar{x}), d_{j} \rangle_{X^{\ast}, X} + \frac{1}{2} \langle L_{xx} (\bar{v}) d_{j}, d_{j} \rangle_{X^{\ast}, X} \nonumber
\\
& = \frac{1}{2} \langle L_{xx} (\bar{v}) h_{j}, h_{j} \rangle_{X^{\ast}, X} + \frac{1}{2} \langle L_{xx} (\bar{v}) d_{j}, d_{j} - h_{j} \rangle_{X^{\ast}, X} + \frac{1}{2} \langle L_{xx} (\bar{v}) (d_{j} - h_{j}), h_{j} \rangle_{X^{\ast}, X} - \langle G^{\prime}(\bar{x})^{\ast} \bar{\lambda}, d_{j} \rangle_{X^{\ast}, X} \nonumber
\\
& \geq \frac{c \varepsilon^{2}}{8} - \frac{5\nu}{4} \Vert L_{xx} (\bar{v}) \Vert_{X \to X^{\ast}} \Vert d_{j} - h_{j} \Vert_{X} - ( \bar{\lambda}, G^{\prime}(\bar{x}) d_{j} )_{Y}. \label{ineq:Lxx}
\end{align}
On the other hand, the definition of $\tau_{j}$ guarantees the existence of $p_{j} \in K$ satisfying $\Vert G(\bar{x}) + G^{\prime}(\bar{x}) d_{j} - p_{j} \Vert_{Y} < \tau_{j} + \frac{1}{j}$. Moreover, we obtain $(\bar{\lambda}, G(\bar{x}) - p_{j} )_{Y} \geq 0$ thanks to $\bar{\lambda} \in {\cal N}_{K}(G(\bar{x}))$. These facts derive $- (\bar{\lambda}, G^{\prime}(\bar{x}) d_{j})_{Y} = - (\bar{\lambda}, G(\bar{x}) + G^{\prime}(\bar{x}) d_{j} - p_{j})_{Y} + (\bar{\lambda}, G(\bar{x}) - p_{j} )_{Y} \geq - \Vert \bar{\lambda} \Vert_{Y} ( \tau_{j} + \frac{1}{j} )$.
Then, we have by~\eqref{ineq:fLnu} and~\eqref{ineq:Lxx} that
\begin{align*}
\langle f^{\prime}(x_{j}), d_{j} \rangle_{X^{\ast}, X} + \frac{1}{2} \langle L_{xx}(v_{j}) d_{j}, d_{j} \rangle_{X^{\ast},X}
& \geq \frac{c\varepsilon^{2}}{8} - \frac{5\nu}{4} \Vert L_{xx} (\bar{v}) \Vert_{X \to X^{\ast}} \Vert d_{j} - h_{j} \Vert_{X} - \Vert \bar{\lambda} \Vert_{Y} \left( \tau_{j} + \frac{1}{j} \right)  
\\
& ~ \quad - \nu \Vert f^{\prime}(x_{j}) - f^{\prime}(\bar{x}) \Vert_{X^{\ast}} - \frac{\nu^{2}}{2} \Vert L_{xx}(v_{j}) - L_{xx}(\bar{v}) \Vert_{X \to X^{\ast}}.
\end{align*}
Thus, using~\eqref{ineq:vjxij},~\eqref{def:alpha}, and~\eqref{ineq:dh_j_0} implies
\begin{align*}
\limsup_{j \to \infty} \left[ \langle f^{\prime}(x_{j}), d_{j} \rangle_{X^{\ast}, X} + \frac{1}{2} \langle L_{xx}(v_{j}) d_{j}, d_{j} \rangle_{X^{\ast},X} \right] \geq \frac{c\varepsilon^{2}}{8} > 0.
\end{align*}
\item[(b)] This case means that there exists an infinite sequence $\{ j_{k} \} \subset \mathbb{N}$ satisfying $j_{k} \in {\cal I}$ for any $k \in \mathbb{N}$. Let $k \in \mathbb{N}$ be arbitrary. It follows from $\langle f^{\prime}(\bar{x}), d_{j_{k}} \rangle_{X^{\ast},X} > \frac{\eta}{4} \Vert d_{j_{k}} \Vert_{X}$,~\eqref{ineq:calF_Lxx}, and~\eqref{ineq:vjxij} that $\langle f^{\prime}(\bar{x}), d_{j_{k}} \rangle_{X^{\ast},X} + \frac{1}{2} \langle L_{xx}(\bar{v}) d_{j_{k}}, d_{j_{k}} \rangle_{X^{\ast}, X} \geq \frac{\eta \varepsilon}{8}$. This inequality and~\eqref{ineq:fLnu} derive $\langle f^{\prime}(x_{j_{k}}), d_{j_{k}} \rangle_{X^{\ast}, X} + \frac{1}{2} \langle L_{xx}(v_{j_{k}}) d_{j_{k}}, d_{j_{k}} \rangle_{X^{\ast},X} > \frac{\eta \varepsilon}{8} - \nu \Vert f^{\prime}(x_{j_{k}}) - f^{\prime}(\bar{x}) \Vert_{X^{\ast}} - \frac{\nu^{2}}{2} \Vert L_{xx}(v_{j_{k}}) - L_{xx}(\bar{v}) \Vert_{X \to X^{\ast}}$. Then, we have by~\eqref{ineq:vjxij} that
\begin{align*}
\limsup_{j \to \infty} \left[ \langle f^{\prime}(x_{j}), d_{j} \rangle_{X^{\ast}, X} + \frac{1}{2} \langle L_{xx}(v_{j}) d_{j}, d_{j} \rangle_{X^{\ast},X} \right] \geq \frac{\eta \varepsilon}{8} > 0,
\end{align*}
where note that any sequence $\{ a_{j} \} \subset \mathbb{R}$ and any subsequence $\{ a_{j_{k}} \} \subset \{ a_{j} \}$ satisfy $\limsup_{j \to \infty} a_{j} \geq \limsup_{k \to \infty} a_{j_{k}}$.
\end{itemize}
As a result, both cases~(a) and (b) yield contradictions to~\eqref{ineq:limsup_fL}. Therefore, we can verfy that $\Vert d(v) \Vert_{X} \to 0$ as $\Vert v - \bar{v} \Vert_{V} \to 0$.
\par
Next, we prove item~(ii). Theorem~\ref{theorem:eps_optimal} ensures that for any $v = (x, \lambda) \in V \backslash \Gamma$, there exist another $\sigma(v)^{6}$-optimal solution $\widehat{w}(v) = (\widehat{d}(v), \widehat{\mu}(v)) \in V$ of Q$(v)$ and $\rho(v) \in Y$ such that \eqref{ineq:sigma4_1},~\eqref{ineq:sigma4_3},~\eqref{ineq:sigma4_4}, and the following inequality hold:
\begin{align}
\dist( - f^{\prime}(x) - L_{xx}(v) \widehat{d}(v) - G^{\prime}(x)^{\ast} \rho(v), {\cal N}_{\nu B_{X}}(\widehat{d}(v)) ) \leq \sigma(v)^{3}. \label{ineq:sigma4_2_2}
\end{align}
Recall that $\sigma(v) \to 0$ and $\Vert d(v) \Vert_{X} \to 0$ as $v \to \bar{v}$. Then, we obtain by~\eqref{ineq:sigma4_1} that
\begin{align}
\Vert \widehat{d}(v) \Vert_{X} \leq \sigma(v)^{3} + \Vert d(v) \Vert_{X} \to 0  \quad (v \to \bar{v}). \label{lim:hat_d_0}
\end{align}
Hence, there exists $\gamma > 0$ such that $\widehat{d}(v) \in {\rm int}(\nu B_{X})$ for each $v \in B_{V}(\bar{v}, \gamma) \backslash \Gamma$. Since $\widehat{d}(v) \in {\rm int}(\nu B_{X})$ is equivalent to ${\cal N}_{\nu B_{X}}(\widehat{d}(v)) = \{ 0 \}$ from~\cite[Proposition~6.45]{BaCo11}, it is clear that $\dist(- f^{\prime}(x) - L_{xx}(v) \widehat{d}(v) - G^{\prime}(x)^{\ast} \rho(v), {\cal N}_{\nu B_{X}}(\widehat{d}(v)) ) = \Vert f^{\prime}(x) + L_{xx}(v) \widehat{d}(v) + G^{\prime}(x)^{\ast} \rho(v) \Vert_{X^{\ast}}$ for all $v \in B_{V}(\bar{v}, \gamma) \backslash \Gamma$. This equality and~\eqref{ineq:sigma4_2_2} lead to~\eqref{ineq:sigma4_2}.
\par
Finally, we show item~(i). Let $v \in B_{V}(\bar{v}, \gamma)\backslash \Gamma$ be arbitrary. We consider the following problem:
\begin{align*}
{\cal P}(u) \qquad
\begin{aligned}
& \displaystyle \mini_{d \in X} & & \langle f^{\prime}(x) - s, d \rangle_{X^{\ast}, X} + \frac{1}{2} \langle L_{xx}(v) d, d \rangle_{X^{\ast}, X}
\\
& \subj & & G(x) + G^{\prime}(x) d - t \in K,
\end{aligned}
\end{align*}
where $u \coloneqq (v, s, t) \in V \times X^{\ast} \times Y \eqqcolon U$. Let $\bar{u} \coloneqq (\bar{v}, 0, 0) \in U$. Notice that problem ${\cal P}(\bar{u})$ satisfies the SRCQ at $d = 0$. It then follows from~\cite[Lemma~4.44~and~Proposition~4.47]{BoSh00} that the sets of Lagrange multipliers regarding problem~${\cal P}(u)$, say ${\cal M}(d, u) \coloneqq \{ \rho \in Y; \, f^{\prime}(x) + L_{xx}(v) d + G^{\prime}(x)^{\ast} \rho - s = 0, ~ \rho \in {\cal N}_{K}(G(x) + G^{\prime}(x) d - t) \}$, satisfies ${\cal M}(0, \bar{u}) = \{ \bar{\lambda} \}$ and is upper Lipschitzian at $(0, \bar{u})$, i.e., there exist $m_{2} > 0$ and $r_{2} > 0$ such that for all $(d, u) \in B_{X \times U}((0, \bar{u}), r_{2})$,
\begin{align}
{\cal M}(d, u) \subset \{ \bar{\lambda} \} + m_{2} (\Vert d \Vert_{X} + \Vert u - \bar{u} \Vert_{U}) B_{Y}.  \label{subset:Lambda_UpperLip}
\end{align}
Let us define $u(v) \coloneqq (v, s(v), t(v))$, $s(v) \coloneqq f^{\prime}(x) + L_{xx}(v) \widehat{d}(v) + G^{\prime}(x)^{\ast} \rho(v)$, and $t(v) \coloneqq \sigma(v) ( \widehat{\mu}(v) - \lambda )$ for $v = (x, \lambda) \in B_{V}(\bar{v}, \gamma) \backslash \Gamma$. These definitions and~\eqref{ineq:sigma4_4} imply that for all $v \in B_{V}(\bar{v}, \gamma) \backslash \Gamma$,
\begin{align}
\rho(v) \in {\cal M}(\widehat{d}(v), u(v)). \label{in:eta_Lambda}
\end{align}
Combining item~(i) of~Lemma~\ref{lemma:three_results}, $v \in B_{V}(\bar{v}, \gamma) \backslash \Gamma$, and~\eqref{ineq:sigma4_1} derives $\Vert t(v) \Vert_{Y} \leq \sigma(v) \Vert \mu(v) - \widehat{\mu}(v) \Vert_{Y} + \sigma(v) \Vert \mu(v) \Vert_{Y} + \sigma(v) \Vert \bar{\lambda} \Vert_{Y} + \sigma(v) \Vert \lambda - \bar{\lambda} \Vert_{Y} \leq \sigma(v)^{4} + \sigma(v) \Vert \mu(v) \Vert_{Y} + \sigma(v) \Vert \bar{\lambda} \Vert_{Y} + \gamma \sigma(v) \to 0$ as $\Vert v - \bar{v} \Vert_{V} \to 0$. The fact, \eqref{ineq:sigma4_2}, and~\eqref{lim:hat_d_0} imply that if $\Vert v - \bar{v} \Vert_{V} \to 0$, then
\begin{align}
\Vert \widehat{d}(v) \Vert_{X} + \Vert u(v) - \bar{u} \Vert_{U} = \Vert \widehat{d}(v) \Vert_{X} + \Vert v - \bar{v} \Vert_{V} + \Vert s(v) \Vert_{X^{\ast}} + \Vert t(v) \Vert_{Y} \to 0. \label{lim:xivuv}
\end{align}
Thus, there exists $r_{3} \in (0, \gamma)$ such that $(\widehat{d}(v), u(v)) \in B_{X \times U}((0, \bar{u}), r_{2})$ for any $v \in B_{V}(\bar{v}, r_{3}) \backslash \Gamma$. It then follows from~\eqref{subset:Lambda_UpperLip} and~\eqref{in:eta_Lambda} that for $v \in B_{V}(\bar{v}, r_{3}) \backslash \Gamma$,
\begin{align}
\rho(v) \in \{ \bar{\lambda} \} + m_{2} (\Vert \widehat{d}(v) \Vert_{X} + \Vert u(v) - \bar{u} \Vert_{U}) B_{Y}. \label{subset:eta_bounded}
\end{align}
Since the triangle inequality ensures $\Vert \mu(v) - \bar{\lambda} \Vert_{Y} \leq \Vert \mu(v) - \widehat{\mu}(v) \Vert_{Y} + \Vert \widehat{\mu}(v) - \rho(v) \Vert_{Y} + \Vert \rho(v) - \bar{\lambda} \Vert_{Y}$, combining~\eqref{ineq:sigma4_1},~\eqref{ineq:sigma4_3}, and~\eqref{subset:eta_bounded} derives
\begin{align*}
\Vert \mu(v) - \bar{\lambda} \Vert_{Y} \leq \sigma(v)^{3} + \sigma(v)^{2} + m_{2} ( \Vert \widehat{d}(v) \Vert_{X} + \Vert u(v) - \bar{u} \Vert_{U} )
\end{align*}
for $v \in B_{V}(\bar{v}, r_{3}) \backslash \Gamma$. Hence, using~\eqref{lim:xivuv} yields $\Vert \mu(v) - \bar{\lambda} \Vert_{Y} \to 0$ as $\Vert v - \bar{v} \Vert_{V} \to 0$. This result and the former part of the proof guarantee that $\Vert w(v) - \bar{w} \Vert_{V} = \Vert d(v) \Vert_{X} + \Vert \mu(v) - \bar{\lambda} \Vert_{Y} \to 0$ as $\Vert v - \bar{v} \Vert_{V} \to 0$.
\end{proof}

\noindent
By exploiting the above results, the following theorem guarantees the solvability of problem~P$(v)$ and provides an inequality that is crucial for analyzing the local convergence of Algorithm~\ref{Local_SSQP}.
\begin{theorem} \label{theorem:omega_sigma}
Suppose that {\rm (A1)} and {\rm (A2)} hold. Let $w(v) = (d(v), \mu(v))$ denote a $\sigma(v)^{6}$-optimal solution of ${\rm Q}(v)$ for any $v = (x, \lambda) \in V \backslash \Gamma$. Then, there exist $m > 0$ and $r > 0$ such that for all $v \in B_{V}(\bar{v}, r) \backslash \Gamma$, the $\sigma(v)^{6}$-optimal solution $w(v)$ satisfies $\Vert w(v) - \bar{w} \Vert_{V} \leq m \sigma(v)$ and is an approximate local optimum of ${\rm P}(v)$.
\end{theorem}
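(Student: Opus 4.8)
The plan is to establish the two claims separately: the quantitative bound $\Vert w(v) - \bar{w} \Vert_{V} \le m\sigma(v)$ is the substantial part, while the approximate local optimality for ${\rm P}(v)$ follows quickly from the inactivity of the spherical constraint $\Vert d \Vert_{X} \le \nu$. First I would reduce the bound to a single \emph{primal} estimate $\Vert \widehat{d}(v) \Vert_{X} \le C\sigma(v)$. Granting this, \eqref{ineq:sigma4_1} gives $\Vert d(v) \Vert_{X} \le \sigma(v)^{3} + \Vert \widehat{d}(v) \Vert_{X} = O(\sigma(v))$; the multiplier bound \eqref{subset:eta_bounded} gives $\Vert \rho(v) - \bar{\lambda} \Vert_{Y} = O(\Vert \widehat{d}(v) \Vert_{X} + \Vert u(v) - \bar{u} \Vert_{U})$, and then \eqref{ineq:sigma4_1} and \eqref{ineq:sigma4_3} yield $\Vert \mu(v) - \bar{\lambda} \Vert_{Y} = O(\sigma(v))$, so that $\Vert w(v) - \bar{w} \Vert_{V} = \Vert d(v) \Vert_{X} + \Vert \mu(v) - \bar{\lambda} \Vert_{Y} = O(\sigma(v))$. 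To make this quantitative I record $\Vert u(v) - \bar{u} \Vert_{U} = O(\sigma(v))$: Proposition~\ref{prop:error_bound} controls $\Vert v - \bar{v} \Vert_{V} \le m\sigma(v)$, \eqref{ineq:sigma4_2} controls $\Vert s(v) \Vert_{X^{\ast}} \le \sigma(v)^{3}$, and since $\widehat{\mu}(v) \to \bar{\lambda}$ by Proposition~\ref{proposition:xi_zero}(i), the factor $\Vert \widehat{\mu}(v) - \lambda \Vert_{Y}$ stays bounded near $\bar{v}$, whence $\Vert t(v) \Vert_{Y} = \sigma(v)\Vert \widehat{\mu}(v) - \lambda \Vert_{Y} = O(\sigma(v))$.

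For the primal estimate I would split into the same two cases as in Proposition~\ref{proposition:xi_zero}, according to whether $\langle f'(\bar{x}), d(v) \rangle_{X^{\ast}, X} > \frac{\eta}{4}\Vert d(v) \Vert_{X}$ or not. In the ``large'' case, combining this lower bound with \eqref{ineq:calF_Lxx} gives $\langle f'(\bar{x}), d(v)\rangle_{X^{\ast},X} + \frac{1}{2}\langle L_{xx}(\bar{v})d(v), d(v)\rangle_{X^{\ast},X} \ge \frac{\eta}{8}\Vert d(v) \Vert_{X}$; transporting the gradient and Hessian from $\bar{v}$ to $v$ (a Lipschitz comparison controlled through (A3) by $\Vert v - \bar{v} \Vert_{V} \le m\sigma(v)$) and invoking the upper bound $\langle f'(x), d(v)\rangle_{X^{\ast},X} + \frac{1}{2}\langle L_{xx}(v)d(v), d(v)\rangle_{X^{\ast},X} \le m\sigma(v)$ of Lemma~\ref{lemma:three_results}(ii), the term linear in $\Vert d(v)\Vert_{X}$ dominates and forces $\Vert d(v)\Vert_{X} = O(\sigma(v))$ directly.

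The ``small'' case is the crux, and here I expect the \textbf{main obstacle}. A pure second-order growth argument, projecting $d(v)$ onto $\Omega$ via \eqref{ineq:dist_dPhi} and using (A2), only controls the quadratic part of the objective; the linear part $\langle f'(\bar{x}), \cdot \rangle_{X^{\ast},X}$ evaluated at the $O(\sigma(v))$-sized feasibility correction leaves a bare $O(\sigma(v))$ term, yielding merely the H\"older rate $\Vert d(v)\Vert_{X} = O(\sqrt{\sigma(v)})$. To recover the linear rate I would instead exploit the \emph{stationarity} of $\widehat{d}(v)$: by the construction in Proposition~\ref{proposition:xi_zero}, $\widehat{d}(v)$ is an exact KKT point of the canonically perturbed subproblem ${\cal P}(u(v))$ with multiplier $\rho(v)$, i.e.\ $f'(x) + L_{xx}(v)\widehat{d}(v) + G'(x)^{\ast}\rho(v) = s(v)$ together with \eqref{ineq:sigma4_4}. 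Pairing this identity with $\widehat{d}(v)$, using monotonicity of the normal-cone inclusion \eqref{ineq:sigma4_4} tested against $\bar{\lambda} \in {\cal N}_{K}(G(\bar{x}))$, and substituting \eqref{subset:eta_bounded}, the troublesome contributions reorganize into terms of order $\sigma(v)\Vert \widehat{d}(v)\Vert_{X}$ and $\sigma(v)^{2}$, giving $\langle L_{xx}(v)\widehat{d}(v), \widehat{d}(v)\rangle_{X^{\ast},X} \le C_{1}\sigma(v)\Vert \widehat{d}(v)\Vert_{X} + C_{2}\sigma(v)^{2}$. Combining this with the SOSC-based lower bound
\begin{equation*}
\langle L_{xx}(v)\widehat{d}(v), \widehat{d}(v)\rangle_{X^{\ast},X} \ge \frac{c}{8}\Vert \widehat{d}(v)\Vert_{X}^{2} - C_{3}\sigma(v)\Vert \widehat{d}(v)\Vert_{X},
\end{equation*}
obtained in this case by projecting onto $\Omega$ as above (with $\dist(G(\bar{x}) + G'(\bar{x})d(v), K) = O(\sigma(v))$ and the projected direction lying in $C_{\eta}(\bar{x})$), leads to the quadratic inequality $\frac{c}{8}\Vert \widehat{d}(v)\Vert_{X}^{2} \le C\sigma(v)\Vert \widehat{d}(v)\Vert_{X} + C_{2}\sigma(v)^{2}$, whose resolution gives the desired $\Vert \widehat{d}(v)\Vert_{X} = O(\sigma(v))$. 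The replacement of the bare $O(\sigma(v))$ term by $O(\sigma(v)^{2})$ — achieved precisely by invoking first-order stationarity and normal-cone monotonicity rather than objective growth alone — is what upgrades the rate from H\"older to linear, and amounts to an upper-Lipschitz stability estimate for the primal solution of the perturbed subproblem.

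Finally, for the approximate local optimality I would use that $\Vert d(v)\Vert_{X}, \Vert \widehat{d}(v)\Vert_{X} \to 0$ as $v \to \bar{v}$ by Proposition~\ref{proposition:xi_zero}, shrinking $r$ so that $\widehat{d}(v) \in {\rm int}(\nu B_{X})$ for all $v \in B_{V}(\bar{v}, r)\backslash\Gamma$. Then ${\cal N}_{\nu B_{X}}(\widehat{d}(v)) = \{0\}$, the spherical constraint is inactive, and the approximate KKT residuals \eqref{ineq:sigma4_2}--\eqref{ineq:sigma4_4} for ${\rm Q}(v)$ coincide with the approximate KKT conditions for ${\rm P}(v)$; together with \eqref{ineq:sigma4_1} this certifies $w(v)$ as an approximate local optimum of ${\rm P}(v)$, completing the proof.
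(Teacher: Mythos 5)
Your proposal is essentially correct, but it takes a genuinely different route from the paper for the quantitative bound. The paper's proof is much shorter at this point: it observes that $\bar{w}=(0,\bar{\lambda})$ is a KKT point of ${\rm P}_{0}(\bar{v})$ satisfying the SRCQ and the SOSC, invokes the upper-Lipschitz stability result of Kanzow et al.\ (Theorems~3.1 and~3.2 of that reference) for the canonically perturbed KKT system ${\cal S}(\alpha,\beta)$ of that subproblem, and then merely checks that $(\widehat{d}(v),\rho(v))\in{\cal S}(\alpha(v),\beta(v))$ for the explicit perturbations $\alpha(v)=f^{\prime}(\bar{x})+L_{xx}(\bar{v})\widehat{d}(v)+G^{\prime}(\bar{x})^{\ast}\rho(v)$ and $\beta(v)=G(\bar{x})-G(x)+(G^{\prime}(\bar{x})-G^{\prime}(x))\widehat{d}(v)+\sigma(v)(\widehat{\mu}(v)-\lambda)$, whose norms are $O(\sigma(v))$ by \eqref{ineq:sigma4_2}, \eqref{ineq:sigma4_3}, the Lipschitz estimates, and Proposition~\ref{prop:error_bound}; the bound $\Vert\widehat{d}(v)\Vert_{X}+\Vert\rho(v)-\bar{\lambda}\Vert_{Y}=O(\sigma(v))$ then falls out of the cited theorem, and the triangle inequality with \eqref{ineq:sigma4_1} and \eqref{ineq:sigma4_3} finishes. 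What you propose is in effect a self-contained re-derivation of that stability estimate: the case split on $\langle f^{\prime}(\bar{x}),d(v)\rangle_{X^{\ast},X}$ versus $\tfrac{\eta}{4}\Vert d(v)\Vert_{X}$, the SOSC lower bound after projecting onto $\Omega$, and the upgrade from the H\"older to the linear rate via first-order stationarity plus normal-cone monotonicity tested against $\bar{\lambda}\in{\cal N}_{K}(G(\bar{x}))$ do reproduce the quadratic inequality $\tfrac{c}{8}\Vert\widehat{d}(v)\Vert_{X}^{2}\le C\sigma(v)\Vert\widehat{d}(v)\Vert_{X}+C_{2}\sigma(v)^{2}$, and your diagnosis that objective growth alone only yields $O(\sqrt{\sigma(v)})$ is exactly right. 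Your route buys transparency and avoids the black-box citation for the primal part (though you still lean on \eqref{subset:eta_bounded} for the multipliers); the paper's route buys brevity. Two details you should still pin down: (i) membership of the projected direction $h$ in $C_{\eta}(\bar{x})$ is not automatic, since $\Vert d(v)-h\Vert_{X}=O(\sigma(v))$ need not be small relative to $\Vert d(v)\Vert_{X}$ --- you need the extra sub-case ``$\Vert d(v)\Vert_{X}\le M\sigma(v)$, in which case the conclusion is immediate'' before $\Vert h\Vert_{X}\ge\tfrac12\Vert d(v)\Vert_{X}$ and $\langle f^{\prime}(\bar{x}),h\rangle_{X^{\ast},X}\le\eta\Vert h\Vert_{X}$ can be verified; (ii) transporting $G^{\prime}(x)$ to $G^{\prime}(\bar{x})$ in the paired stationarity identity requires $\Vert\rho(v)\Vert_{Y}$ bounded, which follows from \eqref{subset:eta_bounded} but should be stated. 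Neither is a fatal gap.
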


\begin{proof}
We notice that $\bar{w} = (0, \bar{\lambda}) \in V$ is a KKT point of P$_{0}(\bar{v})$ and satisfies the SRCQ and the SOSC for P$_{0}(\bar{v})$. Thus, we have from~\cite[Theorems~3.1 and 3.2]{KaSt18} that
there exist $m_{1} > 0$ and $r_{1} > 0$ such that for $(\alpha, \beta) \in r_{1} B_{X^{\ast} \times Y}$ and $(d, \rho) \in {\cal S}(\alpha, \beta)$,
\begin{align}
\Vert d \Vert_{X} + \Vert \rho - \bar{\lambda} \Vert_{Y} \leq m_{1} (\Vert \alpha \Vert_{X^{\ast}} + \Vert \beta \Vert_{Y}),  \label{ineq:xi_eta_albe}
\end{align}
where the set ${\cal S}(\alpha, \beta)$ is defined as
\begin{align*}
{\cal S}(\alpha, \beta) \coloneqq \left\{ (d, \rho) \in r_{1} B_{X} \times Y; \,
\begin{gathered}
f^{\prime}(\bar{x}) + L_{xx}(\bar{v}) d + G(\bar{x})^{\ast} \rho - \alpha = 0, 
\\
\rho \in {\cal N}_{K}(G(\bar{x}) + G^{\prime}(\bar{x}) d - \beta)
\end{gathered}
\, \right\}.
\end{align*}
\par
Recall that $f^{\prime}$, $G$, and $G^{\prime}$ are locally Lipschitz continuous at $\bar{v}$, and that (A3) implies $L_{xx}$ is locally Lipschitz continuous at $\bar{v}$. From these facts and Proposition~\ref{prop:error_bound}, there exist $m_{2} > 0$ and $r_{2} > 0$ such that for all $v = (x,\lambda) \in B_{V}(\bar{v}, r_{2})$,
\begin{align}
\begin{aligned} \label{ineq:fGLxx}
& \Vert f^{\prime}(x) - f^{\prime}(\bar{x}) \Vert_{X^{\ast}} \leq m_{2} \sigma(v), && \Vert G(x) - G(\bar{x}) \Vert_{Y} \leq m_{2} \sigma(v),
\\
& \Vert G^{\prime}(x) - G^{\prime}(\bar{x}) \Vert_{X \to Y} \leq m_{2} \sigma(v), && \Vert L_{xx}(v) - L_{xx}(\bar{v}) \Vert_{X \to X^{\ast}} \leq m_{2} \sigma(v). 
\end{aligned}
\end{align}
\par
Item~(ii) of Proposition~\ref{proposition:xi_zero} ensures that there exists $r_{3} > 0$ such that for any $v \in B_{V}(\bar{v}, r_{3}) \backslash \Gamma$, there exist another $\sigma(v)^{6}$-optimal solution $\widehat{w}(v) = (\widehat{d}(v), \widehat{\mu}(v)) \in V$ and $\rho(v) \in Y$ such that~\eqref{ineq:sigma4_1}--\eqref{ineq:sigma4_4} are satisfied. We define 
\begin{align}
m_{3} \coloneqq 1 + \Vert \bar{\lambda} \Vert_{Y}, \quad m_{4} \coloneqq r_{1} + 3 m_{2} + 2 m_{3} + 2 r_{1} m_{2} + m_{2} m_{3}. \label{def:m3m4}
\end{align}
Note that $\sigma(v) \to 0$ as $\Vert v - \bar{v} \Vert_{V} \to 0$. By this fact and items~(i) and~(ii) of Proposition~\ref{proposition:xi_zero}, we have $\Vert \widehat{d}(v) \Vert_{X} \to 0$ and $\Vert \widehat{\mu}(v) - \bar{\lambda} \Vert_{Y} \to 0$ as $\Vert v - \bar{v} \Vert_{V} \to 0$. According to these facts, there exists $r_{4} > 0$ such that for all $v \in B_{V}(\bar{v}, r_{4}) \backslash \Gamma$,
\begin{align}
\sigma(v) \leq \min \left\{ 1, \, \frac{r_{1}}{2 m_{4}}, \frac{\nu}{4} \right\}, \quad \Vert \widehat{d}(v) \Vert_{X} \leq \min \left\{ r_{1}, \frac{\nu}{4} \right\}, \quad \Vert \widehat{\mu}(v) \Vert_{Y} \leq m_{3}. \label{ineq:sigma_xizeta}
\end{align}
\par
In the following, we show the assertion of this theorem. Let $m > 0$ and $r > 0$ be defined by
\begin{align}
m \coloneqq 2 + m_{1} m_{4}, \quad r \coloneqq \min \{ r_{1}, r_{2}, r_{3}, r_{4} \}, \label{def:constants_mr}
\end{align}
respectively. We take $v \in B_{V}(\bar{v}, r) \backslash \Gamma$ arbitrarily and define $\alpha(v)$ and $\beta(v)$ as follows:
\begin{align}
\begin{aligned} \label{def:alpha_beta}
\alpha(v) &\coloneqq f^{\prime}(\bar{x}) + L_{xx}(\bar{v}) \widehat{d}(v) + G^{\prime}(\bar{x})^{\ast} \rho(v), 
\\
\beta(v) &\coloneqq G(\bar{x}) - G(x) + G^{\prime}(\bar{x}) \widehat{d}(v) - G^{\prime}(x) \widehat{d}(v) + \sigma(v) (\widehat{\mu}(v) - \lambda). 
\end{aligned}
\end{align}
Exploiting~\eqref{ineq:sigma4_2},~\eqref{ineq:sigma4_3},~\eqref{ineq:fGLxx}, and~\eqref{ineq:sigma_xizeta} derives
\begin{align*}
\Vert \alpha(v) \Vert_{X^{\ast}} 
& \leq \Vert f^{\prime}(x) + L_{xx}(v) \widehat{d}(v) + G^{\prime}(x)^{\ast} \rho(v) \Vert_{X^{\ast}} + \Vert f^{\prime}(x) - f^{\prime}(\bar{x}) \Vert_{X^{\ast}}
\\
&\qquad + \Vert L_{xx}(v) - L_{xx}(\bar{v}) \Vert_{X \to X^{\ast}} \Vert \widehat{d}(v) \Vert_{X} + \Vert G^{\prime}(x) - G^{\prime}(\bar{x}) \Vert_{X \to Y} \Vert \rho(v) \Vert_{Y}
\\
& \leq \sigma(v)^{3} + m_{2} \sigma(v) + r_{1} m_{2} \sigma(v) + m_{2} \sigma(v) (\Vert \rho(v) - \widehat{\mu}(v) \Vert_{Y} + \Vert \widehat{\mu}(v) \Vert_{Y})
\\
& \leq (1 + 2 m_{2} + r_{1} m_{2} + m_{2} m_{3}) \sigma(v),
\end{align*}
and
\begin{align*}
\Vert \beta(v) \Vert_{Y} 
& \leq \Vert G(x) - G(\bar{x}) \Vert_{Y} + \Vert G^{\prime}(x) - G^{\prime}(\bar{x}) \Vert_{X \to Y} \Vert \widehat{d}(v) \Vert_{X} 
\\
& \hspace{34.85mm} + \sigma(v) (\Vert \widehat{\mu}(v) \Vert_{Y} + \Vert \lambda - \bar{\lambda} \Vert_{Y} + \Vert \bar{\lambda} \Vert_{Y})
\\
& \leq ( m_{2} + r_{1} m_{2} + m_{3} + r_{1} + \Vert \bar{\lambda} \Vert_{Y} ) \sigma(v),
\end{align*}
where note that $v \in B_{V}(\bar{v}, r)$ and the second equality of~\eqref{def:constants_mr} yield $\Vert \lambda - \bar{\lambda} \Vert_{Y} \leq r_{1}$, and it is used in the last inequality.
It then follows from~\eqref{def:m3m4} and \eqref{ineq:sigma_xizeta} that
\begin{align}
\Vert \alpha(v) \Vert_{X^{\ast}} + \Vert \beta(v) \Vert_{Y} \leq m_{4} \sigma(v) \leq \frac{r_{1}}{2} < r_{1}, ~~ \mbox{i.e.,} ~~ (\alpha(v), \beta(v)) \in r_{1} B_{X^{\ast} \times Y}. \label{ineq:alpha_beta}
\end{align}
Now, combining~\eqref{ineq:sigma4_4},~\eqref{ineq:sigma_xizeta}, and~\eqref{def:alpha_beta} implies $(\widehat{d}(v), \rho(v)) \in {\cal S}(\alpha(v), \beta(v))$. This fact,~\eqref{ineq:xi_eta_albe}, and~\eqref{ineq:alpha_beta} lead to
\begin{align}
\Vert \widehat{d}(v) \Vert_{X} + \Vert \rho(v) - \bar{\lambda} \Vert_{Y} \leq m_{1} (\Vert \alpha(v) \Vert_{X^{\ast}} + \Vert \beta(v) \Vert_{Y}). \label{ineq:xi_hatd_rho}
\end{align}
By~\eqref{ineq:alpha_beta} and~\eqref{ineq:xi_hatd_rho}, we obtain
\begin{align}
\Vert w(v) - \bar{w} \Vert_{V} 
&\leq \Vert \widehat{d}(v) \Vert_{X} + \Vert \rho(v) - \bar{\lambda} \Vert_{Y} + \Vert d(v) - \widehat{d}(v) \Vert_{X} + \Vert \mu(v) - \rho(v) \Vert_{Y} \nonumber
\\
&\leq m_{1} m_{4} \sigma(v) + \Vert w(v) - \widehat{w}(v) \Vert_{V} + \Vert \rho(v) - \widehat{\mu}(v) \Vert_{Y}. \label{ineq:omega_bar}
\end{align}
Meanwhile, the first inequality of~\eqref{ineq:sigma_xizeta} yields $\sigma(v)^{3} \leq \sigma(v)^{2} \leq \sigma(v)$. Then, using~\eqref{ineq:sigma4_1} and~\eqref{ineq:sigma4_3} implies $\Vert w(v) - \widehat{w}(v) \Vert_{V} + \Vert \rho(v) - \widehat{\mu}(v) \Vert_{Y} \leq 2 \sigma(v)$.
These facts, the first equality of~\eqref{def:constants_mr}, and~\eqref{ineq:omega_bar} derive $\Vert w(v) - \bar{w} \Vert_{V} \leq m \sigma(v)$.
\par
From~\eqref{ineq:sigma4_1} and~\eqref{ineq:sigma_xizeta}, it is clear that $\Vert d(v) \Vert_{X} \leq \Vert \widehat{d}(v) \Vert_{X} + \sigma(v) \leq \frac{1}{2} \nu < \nu$. This fact means that the constraint $\Vert d \Vert_{X} \leq \nu$ in Q$(v)$ is superfluous, and hence $w(v)$ is an approximate local optimum of P$(v)$.
\end{proof}

\par
Hereafter, we focus on the local and quadratic convergence of Algorithm~\ref{Local_SSQP}. First, we prepare a corollary of Theorem~\ref{theorem:omega_sigma}, which provides some inequalities possessing suitable forms for the subsequent analysis compared to that of Theorem~\ref{theorem:omega_sigma}. Second, by utilizing these inequalities and the Lipschitz continuity of the second derivatives of $f$ and $G$, we prepare a lemma that plays a crucial role in the analysis of the quadratic convergence. Finally, we prove the local and quadratic convergence.

\begin{corollary} \label{coro:solvable}
Suppose that {\rm (A1)} and {\rm (A2)} hold. For any $v = (x, \lambda) \in V \backslash \Gamma$, let $w(v) = (d(v), \mu(v))$ denote a $\sigma(v)^{6}$-optimal solution of ${\rm Q}(v)$, and let $p(v) = ( d(v), \mu(v) - \lambda )$. Then, there exist $m > 0$ and $r > 0$ such that for all $v \in B_{V}(\bar{v}, r) \backslash \Gamma$, the $\sigma(v)^{6}$-optimal solution $w(v)$ is an approximate local optimum of~${\rm P}(v)$ and satisfies
\begin{align*}
\begin{aligned}
\Vert p(v) \Vert_{V} \leq m \sigma(v), \quad \Vert d(v) \Vert_{X} \leq m \sigma(v), \quad \Vert \mu(v) - \lambda \Vert_{Y} \leq m \sigma(v).
\end{aligned}
\end{align*}
\end{corollary}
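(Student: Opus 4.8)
The plan is to deduce this corollary almost immediately from Theorem~\ref{theorem:omega_sigma}, with Proposition~\ref{prop:error_bound} supplying the single missing ingredient; there is no substantial new obstacle. First I would invoke Theorem~\ref{theorem:omega_sigma} to obtain constants $m_1 > 0$ and $r_1 > 0$ such that, for every $v \in B_V(\bar{v}, r_1) \backslash \Gamma$, the $\sigma(v)^6$-optimal solution $w(v)$ is an approximate local optimum of P$(v)$ and satisfies $\Vert w(v) - \bar{w} \Vert_V \leq m_1 \sigma(v)$. Since $\bar{w} = (0, \bar{\lambda})$ and the norm on $V$ is additive across the two factors, this reads $\Vert d(v) \Vert_X + \Vert \mu(v) - \bar{\lambda} \Vert_Y \leq m_1 \sigma(v)$, which already delivers both the approximate-optimality claim and the primal estimate $\Vert d(v) \Vert_X \leq m_1 \sigma(v)$.

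The only genuine gap is that Theorem~\ref{theorem:omega_sigma} controls $\mu(v)$ relative to the limit multiplier $\bar{\lambda}$, whereas the corollary requires control relative to the current multiplier $\lambda$. To bridge this I would apply Proposition~\ref{prop:error_bound}, obtaining constants $m_2 > 0$ and $r_2 > 0$ with $\Vert v - \bar{v} \Vert_V \leq m_2 \sigma(v)$ on $B_V(\bar{v}, r_2)$; in particular $\Vert \lambda - \bar{\lambda} \Vert_Y \leq \Vert v - \bar{v} \Vert_V \leq m_2 \sigma(v)$. A triangle inequality then yields
\[
\Vert \mu(v) - \lambda \Vert_Y \leq \Vert \mu(v) - \bar{\lambda} \Vert_Y + \Vert \bar{\lambda} - \lambda \Vert_Y \leq (m_1 + m_2)\sigma(v).
\]

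Finally, using additivity of the product norm once more, I would bound $\Vert p(v) \Vert_V = \Vert d(v) \Vert_X + \Vert \mu(v) - \lambda \Vert_Y \leq m_1 \sigma(v) + (m_1 + m_2)\sigma(v)$, and then set $m \coloneqq 2 m_1 + m_2$ and $r \coloneqq \min\{r_1, r_2\}$. With these choices each of the three estimates holds (each of $\Vert p(v)\Vert_V$, $\Vert d(v)\Vert_X$, and $\Vert \mu(v)-\lambda\Vert_Y$ is bounded by a quantity no larger than $m\sigma(v)$), and the approximate-optimality assertion transfers verbatim from Theorem~\ref{theorem:omega_sigma}, for every $v \in B_V(\bar{v}, r) \backslash \Gamma$. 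The corollary is thus essentially a repackaging of Theorem~\ref{theorem:omega_sigma} into a form better suited to the subsequent quadratic-convergence analysis, and the one nontrivial observation is that replacing $\bar{\lambda}$ by $\lambda$ costs only the extra term $\Vert \lambda - \bar{\lambda} \Vert_Y$, which the error bound of Proposition~\ref{prop:error_bound} absorbs into a multiple of $\sigma(v)$.
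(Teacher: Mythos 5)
Your proposal is correct and follows essentially the same route as the paper: the paper likewise combines Theorem~\ref{theorem:omega_sigma} (giving $\Vert w(v)-\bar{w}\Vert_{V}\leq b\sigma(v)$ and approximate optimality) with Proposition~\ref{prop:error_bound} (absorbing $\Vert\lambda-\bar{\lambda}\Vert_{Y}$ into a multiple of $\sigma(v)$) and a triangle inequality, taking $m=2b$. The only difference is cosmetic bookkeeping of the constants.
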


\begin{proof}
Proposition~\ref{prop:error_bound} and Theorem~\ref{theorem:omega_sigma} ensure that there exist $b > 0$ and $r > 0$ such that for any $v \in B_{V}(\bar{v}, r) \backslash \Gamma$,
\begin{align}
\Vert v - \bar{v} \Vert_{V} \leq b \sigma(v), \quad \Vert w(v) - \bar{w} \Vert_{V} \leq b \sigma(v), \label{ineq:omega_sigma}
\end{align}
and $w(v)$ is an approximate local optimum of~P$(v)$ whenever $v \in B_{V}(\bar{v}, r) \backslash \Gamma$. Let $m \coloneqq 2 b$ and let $v \in B_{V}(\bar{v}, r) \backslash \Gamma$. Combining~\eqref{ineq:omega_sigma} and the definition of $p(v)$ yields $\Vert p(v) \Vert_{V} \leq \Vert w(v) - \bar{w} \Vert_{V} + \Vert \lambda - \bar{\lambda} \Vert_{Y} \leq m \sigma(v)$.
Since $\Vert p(v) \Vert_{V} = \Vert d(v) \Vert_{X} + \Vert \mu(v) - \lambda \Vert_{Y}$ holds, we have $\Vert d(v) \Vert_{X} \leq m \sigma(v)$ and $\Vert \mu(v) - \lambda \Vert_{Y} \leq m \sigma(v)$. 
\end{proof}


\begin{lemma} \label{lem:sigma_property}
Suppose that {\rm (A1)}, {\rm (A2)}, and {\rm (A3)} are satisfied. For each $v = (x, \lambda) \in V \backslash \Gamma$, let $w(v) = (d(v), \mu(v))$ denote a $\sigma(v)^{6}$-optimal solution of~${\rm Q}(v)$, and let $p(v) = ( d(v), \mu(v) - \lambda )$. Then, there exist $m > 0$ and $r > 0$ such that $\sigma(v + p(v)) \leq m \sigma(v)^{2}$ for all $v \in B_{V}(\bar{v}, r) \backslash \Gamma$.
\end{lemma}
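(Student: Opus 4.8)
The plan is to estimate the two summands of $\sigma(v + p(v))$ separately, where $v + p(v) = (x + d(v), \mu(v))$, and to show that each is of order $\sigma(v)^{2}$. Recalling the definition~\eqref{def:sigma} of $\sigma$, we have
\begin{align*}
\sigma(v + p(v)) = \Vert f^{\prime}(x + d) + G^{\prime}(x + d)^{\ast} \mu \Vert_{X^{\ast}} + \Vert G(x + d) - P_{K}(G(x + d) + \mu) \Vert_{Y},
\end{align*}
where I abbreviate $\sigma := \sigma(v)$, $d := d(v)$, $\mu := \mu(v)$, and write $\hat{d} := \hat{d}(v)$, $\hat{\mu} := \hat{\mu}(v)$, $\rho := \rho(v)$ for the quantities furnished by item~(ii) of Proposition~\ref{proposition:xi_zero}. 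The backbone of the argument is the second-order Taylor estimate available under~(A3): there exist $L > 0$ and $r_{0} > 0$ such that, for $x \in B_{X}(\bar{x}, r_{0})$ and small $d$, one has $\Vert f^{\prime}(x + d) - f^{\prime}(x) - f^{\prime\prime}(x) d \Vert_{X^{\ast}} \leq \tfrac{L}{2} \Vert d \Vert_{X}^{2}$, $\Vert G(x + d) - G(x) - G^{\prime}(x) d \Vert_{Y} \leq \tfrac{L}{2} \Vert d \Vert_{X}^{2}$, and the analogous bound for $G^{\prime}(x + d) - G^{\prime}(x) - G^{\prime\prime}(x) d$ in the norm $\Vert \cdot \Vert_{X \to Y}$. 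Since Corollary~\ref{coro:solvable} already yields $\Vert d \Vert_{X} \leq m \sigma$ and $\Vert \mu - \lambda \Vert_{Y} \leq m \sigma$, every such remainder is of order $\sigma^{2}$.

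For the stationarity term I would Taylor-expand both $f^{\prime}$ and $G^{\prime}$ and use the identity $L_{xx}(v) d = f^{\prime\prime}(x) d + (G^{\prime\prime}(x) d)^{\ast} \lambda$ to rewrite
\begin{align*}
f^{\prime}(x + d) + G^{\prime}(x + d)^{\ast} \mu = \big( f^{\prime}(x) + L_{xx}(v) d + G^{\prime}(x)^{\ast} \mu \big) + (G^{\prime\prime}(x) d)^{\ast} (\mu - \lambda) + e,
\end{align*}
where $e$ gathers the Taylor remainders, so that $\Vert e \Vert_{X^{\ast}} = O(\sigma^{2})$ (the factor $\mu$ in the $G^{\prime}$-remainder is harmless because $\mu \to \bar{\lambda}$ by Proposition~\ref{proposition:xi_zero}, whence $\Vert \mu \Vert_{Y}$ is bounded). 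The middle term is $O(\sigma^{2})$ since $\Vert (G^{\prime\prime}(x) d)^{\ast} (\mu - \lambda) \Vert_{X^{\ast}} \leq \Vert G^{\prime\prime}(x) \Vert \, \Vert d \Vert_{X} \Vert \mu - \lambda \Vert_{Y}$ and both norms are $O(\sigma)$. For the first parenthesis I would pass from $(d, \mu)$ to $(\hat{d}, \rho)$: by~\eqref{ineq:sigma4_1} and~\eqref{ineq:sigma4_3} one has $\Vert d - \hat{d} \Vert_{X} \leq \sigma^{3}$ and $\Vert \mu - \rho \Vert_{Y} \leq \sigma^{3} + \sigma^{2}$, hence
\begin{align*}
\Vert f^{\prime}(x) + L_{xx}(v) d + G^{\prime}(x)^{\ast} \mu \Vert_{X^{\ast}} \leq \Vert f^{\prime}(x) + L_{xx}(v) \hat{d} + G^{\prime}(x)^{\ast} \rho \Vert_{X^{\ast}} + \Vert L_{xx}(v) \Vert_{X \to X^{\ast}} \Vert d - \hat{d} \Vert_{X} + \Vert G^{\prime}(x) \Vert_{X \to Y} \Vert \mu - \rho \Vert_{Y},
\end{align*}
which by~\eqref{ineq:sigma4_2} and the boundedness of $L_{xx}$ and $G^{\prime}$ near $\bar{v}$ is $O(\sigma^{2})$.

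For the complementarity term I would set $z := G(x) + G^{\prime}(x) \hat{d} - \sigma (\hat{\mu} - \lambda)$, for which~\eqref{ineq:sigma4_4} gives $\rho \in \mathcal{N}_{K}(z)$, equivalently $z = P_{K}(z + \rho)$. Taylor-expanding $G$ and absorbing the differences $d - \hat{d}$ and $\sigma(\hat{\mu} - \lambda)$ — each $O(\sigma^{2})$, the latter because $\Vert \hat{\mu} - \lambda \Vert_{Y} = O(\sigma)$ by~\eqref{ineq:sigma4_1} and Corollary~\ref{coro:solvable} — yields $G(x + d) = z + e_{1}$ with $\Vert e_{1} \Vert_{Y} = O(\sigma^{2})$. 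Writing $G(x + d) - P_{K}(G(x + d) + \mu) = (z + e_{1}) - P_{K}(z + \rho + (e_{1} + \mu - \rho))$ and invoking the nonexpansiveness of $P_{K}$ against $z = P_{K}(z + \rho)$ then gives
\begin{align*}
\Vert G(x + d) - P_{K}(G(x + d) + \mu) \Vert_{Y} \leq \Vert e_{1} \Vert_{Y} + \Vert e_{1} + \mu - \rho \Vert_{Y} = O(\sigma^{2}),
\end{align*}
where $\Vert \mu - \rho \Vert_{Y} = O(\sigma^{2})$ is used once more. Summing the two estimates produces constants $m > 0$ and $r > 0$ with $\sigma(v + p(v)) \leq m \sigma(v)^{2}$ on $B_{V}(\bar{v}, r) \backslash \Gamma$.

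I expect the principal obstacle to be the bookkeeping that keeps every error at order $\sigma^{2}$ rather than a lower power: the approximate KKT relations~\eqref{ineq:sigma4_2}--\eqref{ineq:sigma4_4} hold at the auxiliary triple $(\hat{d}, \hat{\mu}, \rho)$, whereas $\sigma(v + p(v))$ is assembled from the genuine $\sigma(v)^{6}$-optimal solution $(d, \mu)$. The entire estimate therefore rests on the transfers~\eqref{ineq:sigma4_1} and~\eqref{ineq:sigma4_3} between these two, together with the linear bounds $\Vert d \Vert_{X}, \Vert \mu - \lambda \Vert_{Y} = O(\sigma)$ from Corollary~\ref{coro:solvable}, which are exactly what convert each quadratic Taylor remainder and each cross term into a quantity of order $\sigma^{2}$.
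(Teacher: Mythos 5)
Your proposal is correct and follows essentially the same route as the paper's proof: split $\sigma(v+p(v))$ into the stationarity and complementarity residuals, transfer from the $\sigma(v)^{6}$-optimal pair $(d,\mu)$ to the auxiliary triple $(\hat d,\hat\mu,\rho)$ via \eqref{ineq:sigma4_1} and \eqref{ineq:sigma4_3}, anchor the estimates on \eqref{ineq:sigma4_2} and \eqref{ineq:sigma4_4}, and absorb all Taylor remainders and cross terms into $O(\sigma^{2})$ using Corollary~\ref{coro:solvable}. The only differences are cosmetic: the paper packages your separate expansions of $f'$ and $G'$ (including the cross term $(G''(x)d)^{\ast}(\mu-\lambda)$) into a single joint Taylor remainder of $L_{x}$ in $v$, and your $z$, $e_{1}$, and $e_{1}+\mu-\rho$ correspond exactly to its $P_{K}(R(v))$ and $S(v)-R(v)$ bookkeeping.
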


\begin{proof}
It follows from Corollary~\ref{coro:solvable} that there exist $m_{1} > 0$ and $r_{1} > 0$ such that for each $v = (x, \lambda) \in B_{V}(\bar{v}, r_{1}) \backslash \Gamma$, 
\begin{align} \label{ineq:delta_v}
\begin{aligned}
\Vert p(v) \Vert_{V} \leq m_{1} \sigma(v), \quad \Vert d(v) \Vert_{X} \leq m_{1} \sigma(v), \quad \Vert \mu(v) - \lambda \Vert_{Y} \leq m_{1} \sigma(v).
\end{aligned}
\end{align} 
Now, we recall that $G^{\prime}$ is locally Lipschitz continuous at $\bar{x}$. Moreover, assumption (A3) implies that $L_{xx}$ is locally Lipschitz continuous at $\bar{v}$. From these facts, there exist $m_{2} > 0$ and $r_{2} > 0$ such that for $v = (x, \lambda) \in B_{V}(\bar{v}, r_{2})$ and $\widetilde{v} = (\widetilde{x}, \widetilde{\lambda}) \in B_{V}(\bar{v}, r_{2})$,
\begin{align}
& \Vert L_{x}(v) - L_{x}(\widetilde{v}) - L_{xx}(\widetilde{v}) (x - \widetilde{x}) - G^{\prime}(\widetilde{x})^{\ast} (\lambda - \widetilde{\lambda}) \Vert_{X^{\ast}} \leq m_{2} \Vert v - \widetilde{v} \Vert_{V}^{2},  \label{ineq:nab_Lag}
\\
& \Vert L_{xx}(v) - L_{xx}(\widetilde{v}) \Vert_{X \to X^{\ast}} \leq m_{2} \Vert v - \widetilde{v} \Vert_{V}, \label{ineq:nab_Lag2}
\\
& \Vert G(x) - G(\widetilde{x}) - G^{\prime}(\widetilde{x}) (x - \widetilde{x}) \Vert_{Y} \leq m_{2} \Vert x - \widetilde{x} \Vert_{X}^{2}, \label{ineq:func_G}
\\
& \Vert G^{\prime}(x) - G^{\prime}(\widetilde{x}) \Vert_{X \to Y} \leq m_{2} \Vert x - \widetilde{x} \Vert_{X}. \label{ineq:func_G2}
\end{align}
Let $m_{3} \coloneqq \max \{ \Vert L_{xx}(\bar{v}) \Vert_{X \to X^{\ast}}, \Vert G^{\prime}(\bar{x}) \Vert_{X \to Y} \} + m_{2} r_{2}$. According to~\eqref{ineq:nab_Lag2} and~\eqref{ineq:func_G2}, we obtain
\begin{align}
\begin{gathered}
\Vert L_{xx}(v) \Vert_{X \to X^{\ast}} \leq \Vert L_{xx}(\bar{v}) \Vert_{X \to X^{\ast}} + m_{2} r_{2} \leq m_{3}, 
\\
\Vert G^{\prime}(x) \Vert_{X \to Y} \leq \Vert G^{\prime}(\bar{x}) \Vert_{X \to Y} + m_{2} r_{2} \leq m_{3} \label{ineq:bounded_LxxG}
\end{gathered}
\end{align}
for $v = (x, \lambda) \in B_{V}(\bar{v}, r_{2})$. From item~(ii) of Proposition~\ref{proposition:xi_zero}, there exists $r_{3} > 0$ such that for any $v \in B_{V}(\bar{v}, r_{3}) \backslash \Gamma$, problem~Q$(v)$ has another $\sigma(v)^{6}$-optimal solution $\widehat{w}(v) = ( \widehat{d}(v), \widehat{\mu}(v) ) \in V$, and there exists $\rho(v) \in Y$ satisfying~\eqref{ineq:sigma4_1}--\eqref{ineq:sigma4_4}. 
Note that $\sigma(v) \to 0$ as $v \to \bar{v}$. Then, there exists $r_{4} > 0$ such that
\begin{align}
\sigma(v) \leq \min \left \{ 1, \frac{r_{1}}{2m_{1}}, \frac{r_{2}}{2m_{1}}, \frac{r_{3}}{2m_{1}} \right\} \label{ineq:delta_v2}
\end{align}
for any $v \in B_{V}(\bar{v}, r_{4}) \backslash \Gamma$.
\par
By exploiting the above results, the assertion of this Lemma is proven in the following. We define $m > 0$ and $r > 0$ as
\begin{align}
m \coloneqq 5 + 2 m_{1} + 4 m_{3} + 3 m_{1}^{2} m_{2}, \quad r \coloneqq \min \left\{ \frac{r_{1}}{2}, \frac{r_{2}}{2}, \frac{r_{3}}{2}, r_{4} \right\}, \label{def:fixed_mr}
\end{align}
respectively. Let $v = (x, \lambda) \in B_{V}(\bar{v}, r) \backslash \Gamma$ be arbitrary. The first ineqaulity of~\eqref{ineq:delta_v}, inequality~\eqref{ineq:delta_v2}, and the second equality of~\eqref{def:fixed_mr} lead to
\begin{align}
\Vert v + p(v) - \bar{v} \Vert_{V} \leq \frac{1}{2}\min \{ r_{1}, r_{2}, r_{3} \} + m_{1} \sigma(v) \leq \min \{ r_{1}, r_{2}, r_{3} \}. \label{ineq:v_vplus}
\end{align}
Using $L_{x}(v) = f^{\prime}(x) + G^{\prime}(x)^{\ast} \lambda$ derives
\begin{align*}
& L_{x}(v + p(v)) 
\\
& = L_{x}(v + p(v)) - L_{x}(v) - L_{xx}(v) d(v) - G^{\prime}(x)^{\ast} (\mu(v) - \lambda) + L_{xx}(v) \widehat{d}(v) + f^{\prime}(x) + G^{\prime}(x)^{\ast} \rho(v)
\\
& \hspace{24mm} + L_{x}(v) + L_{xx}(v) d(v) + G^{\prime}(x)^{\ast} (\mu(v) - \lambda) - L_{xx}(v) \widehat{d}(v) - f^{\prime}(x) - G^{\prime}(x)^{\ast} \rho(v)
\\
& = L_{x}(v + p(v)) - L_{x}(v) - L_{xx}(v) d(v) - G^{\prime}(x)^{\ast} (\mu(v) - \lambda) + L_{xx}(v) \widehat{d}(v) + f^{\prime}(x) + G^{\prime}(x)^{\ast} \rho(v)
\\
& \hspace{73.225mm} + L_{xx}(v) (d(v) - \widehat{d}(v)) + G^{\prime}(x)^{\ast} (\mu(v) - \rho(v)).
\end{align*}
It then follows from~\eqref{ineq:sigma4_2},~\eqref{ineq:nab_Lag},~\eqref{ineq:bounded_LxxG}, and~\eqref{ineq:v_vplus} that
\begin{align}
\Vert L_{x}(v + p(v)) \Vert_{X^{\ast}} 
&\leq m_{2} \Vert p(v) \Vert_{X}^{2} + m_{3} \Vert d(v) - \widehat{d}(v) \Vert_{X} + m_{3} \Vert \mu(v) - \rho(v) \Vert_{Y} + \sigma(v)^{3} \nonumber
\\
&\leq m_{2} \Vert p(v) \Vert_{X}^{2} + m_{3} \Vert w(v) - \widehat{w}(v) \Vert_{X} + m_{3} \Vert \widehat{\mu}(v) - \rho(v) \Vert_{Y} + \sigma(v)^{3}. \label{ineq:Lx_vpv}
\end{align}
By \eqref{ineq:sigma4_1},~\eqref{ineq:sigma4_3},~\eqref{ineq:delta_v},~\eqref{ineq:delta_v2}, and~\eqref{ineq:v_vplus}, we can evaluate~\eqref{ineq:Lx_vpv} as follows:
\begin{align}
\Vert L_{x}(v + p(v)) \Vert_{X^{\ast}} 
& \leq m_{1}^{2} m_{2} \sigma(v)^{2} + \sigma(v)^{3} + m_{3} \sigma(v)^{3} + m_{3} \sigma(v)^{2} \nonumber
\\
& \leq (1 + 2 m_{3} + m_{1}^{2} m_{2}) \sigma(v)^{2}. \label{ineq:sigma_former}
\end{align}
For simplicity, we denote
\begin{align*}
R(v) &\coloneqq G(x) + G^{\prime}(x) \widehat{d}(v) - \sigma(v) (\widehat{\mu}(v) - \lambda) + \rho(v),
\\
S(v) &\coloneqq G(x + d(v)) + \mu(v),
\\
T(v) &\coloneqq G(x + d(v)) - G(x) - G^{\prime}(x) d(v).
\end{align*}
The definition of $R(v)$ and~\eqref{ineq:sigma4_4} yield $G(x) + G^{\prime}(x)\widehat{d}(v) - \sigma(v)(\widehat{\mu}(v) - \lambda) = P_{K}(R(v))$. From the equality and the definitions of $S(v)$ and $T(v)$, we have
\begin{align*}
& G(x + d(v)) - P_{K}(G(x + d(v)) + \mu(v))
\\
& = G(x) + G^{\prime}(x) d(v) + T(v) - P_{K}(S(v))
\\
& = G(x) + G^{\prime}(x) \widehat{d}(v) - \sigma(v) ( \widehat{\mu}(v) - \lambda )  - G^{\prime}(x) \widehat{d}(v) + \sigma(v) ( \widehat{\mu}(v) - \lambda ) + G^{\prime}(x) d(v) + T(v) - P_{K}(S(v))
\\
& = P_{K}(R(v)) - P_{K}(S(v)) + T(v) + G^{\prime}(x)( d(v) - \widehat{d}(v) )  + \sigma(v)(\widehat{\mu}(v) - \mu(v)) + \sigma(v)(\mu(v) - \lambda).
\end{align*}
It then follows from the nonexpansiveness of $P_{K}$ that
\begin{align}
& \Vert G(x + d(v)) - P_{K}(G(x + d(v)) + \mu(v)) \Vert_{Y} \nonumber
\\
& \leq \Vert R(v) - S(v) \Vert_{Y} + \Vert T(v) \Vert_{Y} + \Vert G^{\prime}(x) \Vert_{X \to Y} \Vert d(v) - \widehat{d}(v) \Vert_{X} + \sigma(v) \Vert \mu(v) - \widehat{\mu}(v) \Vert_{Y} + \sigma(v) \Vert \mu(v) - \lambda \Vert_{Y}. \label{ineq:GPK}
\end{align}
In what follows, we evaluate $\Vert R(v) - S(v) \Vert_{Y}$. Note that
\begin{align*}
R(v) - S(v) 
& = G(x) + G^{\prime}(x) d(v) - G(x + d(v))  - G^{\prime}(x)d(v) + G^{\prime}(x) \widehat{d}(v) - \sigma(v) (\widehat{\mu}(v) - \lambda) + \rho(v) - \mu(v)
\\
& = - T(v) + G^{\prime}(x) ( \widehat{d}(v) - d(v) ) - \sigma(v) (\widehat{\mu}(v) - \mu(v) ) - \sigma(v) ( \mu(v) - \lambda)  + \rho(v) - \widehat{\mu}(v) + \widehat{\mu}(v) - \mu(v),
\end{align*}
and hence
\begin{align}
\Vert R(v) - S(v) \Vert_{Y} 
& \leq \Vert T(v) \Vert_{Y} + \Vert G^{\prime}(x) \Vert_{X \to Y} \Vert d(v) - \widehat{d}(v) \Vert_{X} \nonumber
\\
& \qquad + \sigma(v) \Vert \mu(v) - \widehat{\mu}(v) \Vert_{Y} + \sigma(v) \Vert \mu(v) - \lambda \Vert_{Y} + \Vert \rho(v) - \widehat{\mu}(v) \Vert_{Y} + \Vert \widehat{\mu}(v) - \mu(v) \Vert_{Y}.  \label{ineq:RvSv}
\end{align}
Now, combining~\eqref{ineq:func_G},~\eqref{ineq:v_vplus}, and the definition of $T(v)$ yields
\begin{align}
\Vert T(v) \Vert_{Y} \leq m_{2} \Vert d(v) \Vert_{X}^{2}. \label{ineq:Tv}
\end{align}
Substituting~\eqref{ineq:RvSv} and~\eqref{ineq:Tv} into~\eqref{ineq:GPK} derives
\begin{align*}
& \Vert G(x + d(v)) - P_{K}(G(x + d(v)) + \mu(v)) \Vert_{Y}
\\
& \leq 2 m_{2} \Vert d(v) \Vert_{X}^{2} + 2 \Vert G^{\prime}(x) \Vert_{X \to Y} \Vert d(v) - \widehat{d}(v) \Vert_{X} 
\\
& \, \quad + 2\sigma(v) \Vert \mu(v) - \widehat{\mu}(v) \Vert_{Y} + 2\sigma(v) \Vert \mu(v) - \lambda \Vert_{Y} + \Vert \rho(v) - \widehat{\mu}(v) \Vert_{Y} + \Vert \widehat{\mu}(v) - \mu(v) \Vert_{Y}.
\end{align*}
It then follows from~\eqref{ineq:sigma4_1},~\eqref{ineq:sigma4_3},~\eqref{ineq:delta_v},~\eqref{ineq:bounded_LxxG},~\eqref{ineq:delta_v2}, and~\eqref{ineq:v_vplus} that
\begin{align}
& \Vert G(x + d(v)) - P_{K}(G(x + d(v)) + \mu(v)) \Vert_{Y} \nonumber
\\
&\leq 2 m_{1}^{2} m_{2} \sigma(v)^{2} + 2 m_{3} \sigma(v)^{3} + 2 \sigma(v)^{4} + 2 m_{1} \sigma(v)^{2} + \sigma(v)^{2} + \sigma(v)^{3} \nonumber
\\
&\leq (4 + 2 m_{1} + 2 m_{3} + 2 m_{1}^{2} m_{2}) \sigma(v)^{2}. \label{ineq:sigma_latter}
\end{align}
Exploiting~\eqref{def:fixed_mr}, \eqref{ineq:sigma_former}, and~\eqref{ineq:sigma_latter} implies $\sigma(v + p(v)) \leq m \sigma(v)^{2}$. 
\end{proof}

\noindent
Finally, we prove the local and quadratic convergence of Algorithm~\ref{Local_SSQP}.
\begin{theorem}
Suppose that {\rm (A1)}, {\rm (A2)}, and {\rm (A3)} are satisfied. Then, there exists $r > 0$ such that if the initial point $v_{0} \in V$ of Algorithm~{\rm \ref{Local_SSQP}} satisfies $v_{0} \in B_{V}(\bar{v}, r) \backslash \Gamma$, then Algorithm~{\rm \ref{Local_SSQP}} is well defined, and any generated sequence $\{ v_{k} \}$, which satisfies $v_{k} \not \in \Gamma$ for $k \in \mathbb{N} \cup \{ 0 \}$, converges quadratically to $\bar{v}$.
\end{theorem}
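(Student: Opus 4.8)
The plan is to assemble the four preliminary estimates into a single quadratic recursion for $\|v_k - \bar{v}\|_V$ and then control the iterates by a straightforward induction. First I would collect, on a common punctured ball $B_V(\bar{v}, r_0) \setminus \Gamma$, the constants furnished by the earlier results: the error bound $\|v - \bar{v}\|_V \leq m_e \sigma(v)$ from Proposition~\ref{prop:error_bound}, the reverse Lipschitz bound $\sigma(v) \leq m_L \|v - \bar{v}\|_V$ from Proposition~\ref{prop:sigma_Lipschitz}, the step bound $\|p(v)\|_V \leq m_p \sigma(v)$ together with the solvability of P$(v)$ from Corollary~\ref{coro:solvable} and Theorem~\ref{theorem:omega_sigma}, and finally the crucial contraction of the residual $\sigma(v + p(v)) \leq m_\sigma \sigma(v)^2$ from Lemma~\ref{lem:sigma_property}. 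Recall that Step~3 of Algorithm~\ref{Local_SSQP} produces exactly $v_{k+1} = v_k + p(v_k)$, and that the standing assumption guarantees $v_k \notin \Gamma$, hence $\sigma(v_k) > 0$, for every $k$.

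The heart of the argument is the chaining of these bounds. Provided $v_k$ and $v_{k+1}$ both lie in $B_V(\bar{v}, r_0) \setminus \Gamma$, I would write
\begin{align*}
\|v_{k+1} - \bar{v}\|_V \leq m_e \sigma(v_{k+1}) = m_e \sigma(v_k + p(v_k)) \leq m_e m_\sigma \sigma(v_k)^2 \leq m_e m_\sigma m_L^2 \|v_k - \bar{v}\|_V^2,
\end{align*}
so that with $C \coloneqq m_e m_\sigma m_L^2$ one obtains the quadratic recursion $\|v_{k+1} - \bar{v}\|_V \leq C \|v_k - \bar{v}\|_V^2$. This single inequality will deliver both the convergence and its quadratic rate once the iterates are trapped in a suitable neighborhood.

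To secure that trapping, I would shrink the radius. Choosing $r \coloneqq \min\{ r_0 / (1 + m_p m_L),\, 1/(2C) \}$ achieves two things simultaneously. On the one hand, if $v_k \in B_V(\bar{v}, r)$ then $\|v_{k+1} - \bar{v}\|_V \leq \|v_k - \bar{v}\|_V + m_p \sigma(v_k) \leq (1 + m_p m_L) r \leq r_0$, so the estimate at $v_{k+1}$ above is legitimate. On the other hand, the factor $C\|v_k - \bar{v}\|_V \leq Cr \leq 1/2$ turns the recursion into the strict contraction $\|v_{k+1} - \bar{v}\|_V \leq \frac{1}{2}\|v_k - \bar{v}\|_V$, which in particular keeps $v_{k+1}$ inside $B_V(\bar{v}, r)$. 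An induction on $k$ then shows that every $v_k$ stays in $B_V(\bar{v}, r) \setminus \Gamma$; at each step Theorem~\ref{theorem:omega_sigma} guarantees that the $\sigma(v_k)^6$-optimal solution of Q$(v_k)$ is an approximate local optimum of P$(v_k)$, so Step~2 of the algorithm is executable and Algorithm~\ref{Local_SSQP} is well defined. The geometric decay $\|v_k - \bar{v}\|_V \leq 2^{-k}\|v_0 - \bar{v}\|_V$ yields $v_k \to \bar{v}$, while the recursion $\|v_{k+1} - \bar{v}\|_V \leq C\|v_k - \bar{v}\|_V^2$ is precisely the asserted quadratic convergence.

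I expect the main obstacle to be the bookkeeping that keeps the whole iteration inside one neighborhood: the quadratic estimate at $v_{k+1}$ requires $v_{k+1} \in B_V(\bar{v}, r_0)$, yet that membership itself depends on the step bound at $v_k$, so the radius $r$ must be chosen small enough to close this loop before the contraction argument can even be invoked. Everything else --- the algebra of constants and the telescoping decay --- is routine once the recursion $\|v_{k+1} - \bar{v}\|_V \leq C\|v_k - \bar{v}\|_V^2$ is in place, since the genuine analytic work has already been carried out in Lemma~\ref{lem:sigma_property} and the preceding propositions.
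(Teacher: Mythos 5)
Your proposal is correct and follows essentially the same route as the paper: both chain Proposition~\ref{prop:error_bound}, Proposition~\ref{prop:sigma_Lipschitz}, Corollary~\ref{coro:solvable}, and Lemma~\ref{lem:sigma_property} into the quadratic recursion $\Vert v_{k+1} - \bar{v} \Vert_{V} \leq C \Vert v_{k} - \bar{v} \Vert_{V}^{2}$ and close an induction keeping the iterates in a fixed punctured ball where Theorem~\ref{theorem:omega_sigma} supplies the approximate solutions of P$(v_{k})$. The only (harmless) difference is in the trapping step: you keep $v_{k+1}$ in the ball via the direct contraction $\Vert v_{k+1} - \bar{v} \Vert_{V} \leq \tfrac{1}{2}\Vert v_{k} - \bar{v}\Vert_{V}$ after a one-step excursion to the larger radius, whereas the paper bounds the cumulative displacement $\sum_{\ell}\Vert p_{\ell}\Vert_{V}$ by a geometric series in $\sigma_{\ell}$.
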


\begin{proof}
For each $v = (x, \lambda) \in V \backslash \Gamma$, let $w(v) = (d(v), \mu(v))$ denote a $\sigma(v)^{6}$-optimal solution of problem~Q$(v)$, and let $p(v) = (d(v), \mu(v) - \lambda)$. From Propositions~\ref{prop:error_bound} and~\ref{prop:sigma_Lipschitz}, Corollary~\ref{coro:solvable}, and Lemma~\ref{lem:sigma_property}, there exist $b > 1$ and $\gamma \in (0,1]$ such that for all $v \in B_{V}(\bar{v}, \gamma) \backslash \Gamma$,
\begin{align}
\begin{aligned} \label{ineq:4kinds}
& 
\begin{aligned}
& \Vert v - \bar{v} \Vert_{V} \leq b \sigma(v), && \sigma(v) \leq b \Vert v - \bar{v} \Vert_{V},
\\
& \Vert p(v) \Vert_{V} \leq b \sigma(v), && \sigma(v + p(v)) \leq b \sigma(v)^{2},
\end{aligned}
\\
&\mbox{$w(v)$ is an approximate local optimum of P$(v)$.}
\end{aligned}
\end{align}
Since $\sigma(v) \to 0$ as $v \to \bar{v}$, there exists $r > 0$ such that for any $v \in B_{V}(\bar{v},r)$,
\begin{align}
r \leq \frac{b^{2} - 1}{b^{2}} \gamma, \qquad \sigma(v) \leq \frac{\gamma}{2b^{4}}. \label{ineq:r_sigma}
\end{align}
\par
From now on, we discuss the well-definedness of Algorithm~\ref{Local_SSQP}. Let $\{ v_{k} \}$ be a sequence generated by $v_{k+1} = v_{k} + p(v_{k})$ for $k \in \mathbb{N} \cup \{ 0 \}$ and $v_{0} \in B_{V}(\bar{v}, r) \backslash \Gamma$. Assume that $v_{k} \not \in \Gamma$ for each $k \in \mathbb{N} \cup \{ 0 \}$. Let us define $\sigma_{k} \coloneqq \sigma(v_{k})$, $p_{k} \coloneqq p(v_{k})$, and $w_{k} \coloneqq w(v_{k})$ for $k \in \mathbb{N} \cup \{ 0 \}$. We prove the following statement by mathematical induction: For all $k \in \mathbb{N} \cup \{ 0 \}$,
\begin{align}
v_{k} \in B_{V}(\bar{v}, \gamma) \backslash \Gamma, \qquad \sigma_{k} \leq \frac{\gamma}{2 b^{3}}. \label{ineq:math_induction}
\end{align}
Recall that $v_{0} \in B_{V}(\bar{v}, r) \backslash \Gamma$. Since $r \leq \gamma$ holds from $b > 1$ and the first inequality of~\eqref{ineq:r_sigma}, we have $v_{0} \in B_{V}(\bar{v}, \gamma) \backslash \Gamma$. Using $b > 1$, $v_{0} \in B_{V}(\bar{v}, r)$, and the second inequality of~\eqref{ineq:r_sigma} implies $\sigma_{0} \leq \frac{\gamma}{2 b^{4}} \leq \frac{\gamma}{2 b^{3}}$, and thus~\eqref{ineq:math_induction} holds for $k = 0$. Next, we assume that $v_{\ell} \in B_{V}(\bar{v}, \gamma) \backslash \Gamma$ and $\sigma_{\ell} \leq \frac{\gamma}{2 b^{3}}$ for all $\ell \in \{ 0, 1, \ldots, k \}$. It then follows from $\frac{\gamma}{2 b^{2}} \leq \frac{1}{2}$ and the fourth inequality of~\eqref{ineq:4kinds} that for any $\ell \in \{ 1, 2, \ldots, k \}$,
\begin{align}
\sigma_{\ell} \leq b \sigma_{\ell-1}^{2} \leq \frac{1}{2} \sigma_{\ell-1} \leq \frac{1}{2} b \sigma_{\ell-2}^{2} \leq \frac{1}{2^{2}} \sigma_{\ell-2} \leq \cdots \leq \frac{1}{2^{\ell}} \sigma_{0} \leq \frac{\gamma}{2^{\ell+1} b^{3}}. \label{ineq:sigma_ell}
\end{align}
Hence, combining the last inequality of~\eqref{ineq:sigma_ell}, the third inequality of~\eqref{ineq:4kinds}, and the first inequality of~\eqref{ineq:r_sigma} yields
\begin{align}
\Vert v_{k+1} - \bar{v} \Vert_{V}
& \leq \Vert v_{k} - \bar{v} \Vert_{V} + \Vert p_{k} \Vert_{V} \leq \Vert v_{k-1} - \bar{v} \Vert_{V} + \Vert p_{k-1} \Vert_{V} + \Vert p_{k} \Vert_{V} \nonumber
\\
& \leq \cdots \leq \Vert v_{0} - \bar{v} \Vert_{V} + \sum_{\ell=0}^{k} \Vert p_{\ell} \Vert_{V} \leq r + b \sum_{\ell=0}^{k} \sigma_{\ell}  \leq r + \frac{\gamma}{b^{2}} \left( 1 - \frac{1}{2^{k+1}} \right) \leq r + \frac{\gamma}{b^{2}} \leq \gamma. \label{ineq:vkplus1_vbar}
\end{align}
Since $v_{k+1} \not \in \Gamma$ holds by the assumption, the last inequality of~\eqref{ineq:vkplus1_vbar} guarantees $v_{k+1} \in B(\bar{v}, \gamma) \backslash \Gamma$. Moreover, the fourth inequality of~\eqref{ineq:4kinds}, $b > 1$, and $\gamma \in (0,1]$ lead to $\sigma_{k+1} \leq b \sigma_{k}^{2} \leq \frac{\gamma^{2}}{4b^{5}} \leq \frac{\gamma}{2b^{3}}$. Thus, we can verify that~\eqref{ineq:math_induction} holds for all $k \in \mathbb{N} \cup \{ 0 \}$. It then follows from the last assertion of~\eqref{ineq:4kinds} that problem P$(v_{k})$ has an approximate local optimum $w_{k}$ for each $k \in \mathbb{N} \cup \{ 0 \}$ if $v_{0} \in B_{V}(\bar{v}, r) \backslash \Gamma$. Therefore, Algorithm~\ref{Local_SSQP} is well defined whenever its initial point is selected from $B_{V}(\bar{v}, r) \backslash \Gamma$.
\par
We prove the quadratic convergence of an infinite sequence $\{ v_{k} \} \subset V \backslash \Gamma$ generated by Algorithm~\ref{Local_SSQP}, where the initial point satisfies $v_{0} \in B_{V}(\bar{v}, r) \backslash \{ 0 \}$. Now, we know that \eqref{ineq:math_induction} is valid for each $k \in \mathbb{N} \cup \{ 0 \}$. Then, thanks to $r \leq \gamma$ and the first result of~\eqref{ineq:math_induction}, any $v = v_{k}$ satisfies~\eqref{ineq:4kinds} for each $k \in \mathbb{N} \cup \{ 0 \}$. The first, second, and fourth results of~\eqref{ineq:4kinds}, the second result of~\eqref{ineq:math_induction}, and $\gamma \in (0,1]$ yield
\begin{align*}
\Vert v_{k} - \bar{v} \Vert_{V} \leq b \sigma_{k} \leq b^{2} \sigma_{k-1}^{2} \leq \frac{\gamma}{2 b} \sigma_{k-1} \leq \frac{\gamma}{2} \Vert v_{k-1} - \bar{v} \Vert_{V} \leq \frac{1}{2} \Vert v_{k-1} - \bar{v} \Vert_{V}
\end{align*}
for all $k \in \mathbb{N}$. By repeatedly exploiting the above result, we have $\Vert v_{k} - \bar{v} \Vert_{V} \leq \frac{1}{2^{k}} \Vert v_{0} - \bar{v} \Vert_{V}$ for all $k \in \mathbb{N}$. The fact means that $\{ v_{k} \}$ converges to $\bar{v}$. Let $m > 0$ be defined by $m \coloneqq b^{4}$. It then follows from the first, second, and fourth inequalities of~\eqref{ineq:4kinds} that for all $k \in \mathbb{N} \cup \{ 0 \}$,
\begin{align*}
\Vert v_{k+1} - \bar{v} \Vert_{V} \leq b \sigma_{k+1} \leq b^{2} \sigma_{k}^{2} \leq m \Vert v_{k} - \bar{v} \Vert_{V}^{2}.
\end{align*}
Therefore, the sequence $\{ v_{k} \}$ converges quadratically to $\bar{v}$.
\end{proof}

\section{Conclusions}
We proposed a stabilized SQP method for problem~\eqref{problem:main} and provided its local and fast convergence analysis. The considered problem has a general form that can represent a wide range of optimization problems, such as optimal control, obstacle, and shape optimization problems. Although the proposed method is classified as an SQP-type method, it differs from the ordinary SQP methods in the sense that the stabilized subproblems are iteratively solved. The stabilized subproblems are always feasible, but unlike the existing stabilized SQP methods for finite-dimensional optimization, their solvability is not ensured because this study considers infinite-dimensional optimization. Hence, it is difficult to exploit convergence analysis similar to those existing methods. However, under the SRCQ and the SOSC, we proved that the stabilized subproblems can be approximately solved, and their approximate solutions play a crucial role in the local convergence analysis. Consequently, the local and fast convergence of the proposed method was established in a different manner from the existing study on stabilized SQP methods for finite-dimensional optimization.
\par
In future research, it is worth proving the fast local convergence of stabilized SQP methods without assuming any CQ.


\bibliographystyle{abbrv}
\bibliography{references}

\end{document}